\documentclass[10pt]{amsart}

\setlength{\textwidth}{400pt}
\setlength{\marginparwidth}{0pt}
\setlength{\oddsidemargin}{30pt}
\setlength{\evensidemargin}{30pt}

\usepackage[english,french]{babel}
\usepackage[T1]{fontenc}
\usepackage{amsmath,amssymb}
\usepackage[all]{xy}
\usepackage{graphicx}
\usepackage{fancyhdr}

\newtheorem{definition}{D\'efinition }[section]

\newtheorem{lemma}[definition]
{Lemme}
\newtheorem{theorem}[definition]
{Th\'eor\`eme }
\newtheorem{ex}[definition]
{Exemple }

\newtheorem{prop}[definition]{Proposition}

%\newproof{proof}{D\'emonstration}

\newcommand{\lgw}{\longrightarrow}
\newcommand{\lgm}{\longmapsto}

\newcommand{\ovl}{\overline}

\renewcommand{\deg}{\text{deg}\,}
\newcommand{\ord}{\text{ord}}
\newcommand{\Ker}{\text{Ker}}

\newcommand{\wdt}{\widetilde}
\renewcommand{\l}{\lambda}

\renewcommand{\k}{\Bbbk}

\newcommand{\N}{\mathbb{N}}

\renewcommand{\a}{\alpha}
\renewcommand{\b}{\beta}

\renewcommand{\phi}{\varphi}
\renewcommand{\d}{\delta}

\begin{document}
\title{Bornes effectives des fonctions d'approximation des solutions formelles d'\'equations binomiales}
\author{Guillaume Rond}

\address{IML, Campus de Luminy, Case 907, 
13288 Marseille Cedex 9, France}
\email{rond@iml.univ-mrs.fr}
\begin{abstract}
The aim of this paper is to give an effective version of the Strong Artin Approximation Theorem for binomial equations. First we give an effective version of the Greenberg Approximation Theorem for polynomial equations, then using the Weierstrass Preparation Theorem, we apply this effective result to binomial equations. We prove that the Artin function of a system of binomial equations is bounded by a doubly exponential function in general and that it is bounded by an affine function if the the order of the approximated solutions is bounded.
\end{abstract}

%\begin{keyword} Artin Approximation \sep Artin function \sep binomial ideal
\subjclass[2000]{Primary 13B40 ; Secondary 13P99, 14B12} 
%\end{keyword}
\maketitle
%\tableofcontents
\section{Introduction}
Dans \cite{Gr}, M. Greenberg a montr\'e le th\'eor\`eme suivant ($\k$ est un corps quelconque) :
\begin{theorem}\label{Gr}\cite{Gr}
Soit $I$ un id\'eal de $\k[t,X]$ ($X:=(X_1,...,X_n)$). Alors il existe une fonction  $\b :\N\lgw \N$  telle que :
$$\forall i\in\N, \forall x_j(t)\in\k[[t]], 1\leq j\leq n \text{ tels que } f(t,x(t))\in (t)^{\b(i)} \ \forall f\in I$$
$$\exists \ovl{x}_j(t)\in\k[[t]], 1\leq j\leq n,\text{ tels que } f(t,\ovl{x}(t))=0 \ \forall f\in I\text{ et } \ovl{x}_j(t)-x_j(t)\in (t)^i \ \forall j.$$
De plus il existe deux constantes $a$ et $b$ telles que $\forall i\in\N$ $\b(i)\leq ai+b$. 
\end{theorem}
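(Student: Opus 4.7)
The plan is to establish the existence of $\beta$ together with its affine growth in a single argument: a Noetherian induction on the subscheme $V(I)\subset\A^n_{\k[t]}$, combined at each step with a quantitative Hensel-type lifting (Tougeron's implicit function theorem).

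\emph{Réduction au cas premier.} Writing $\sqrt{I}=\mfk{p}_1\cap\cdots\cap\mfk{p}_m$ and fixing $k$ with $(\sqrt{I})^k\subset I$ (Artin--Rees), any $x(t)$ with $f(t,x(t))\in(t)^N$ for all $f\in I$ satisfies $g(t,x(t))\in(t)^{\lfloor N/k\rfloor}$ for all $g\in\sqrt{I}$, hence for all $g$ in some $\mfk{p}_j$. It therefore suffices to prove the theorem for each prime $\mfk{p}_j$ separately and take the maximum of the resulting affine bounds.

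\emph{Étape inductive.} Assume $I=\mfk{p}$ is prime, pick generators $f_1,\ldots,f_r$, and let $h$ be the codimension of $V(\mfk{p})$ in $\A^n_{\k[t]}$ relative to $\k[t]$. Form the ideal $J=\mfk{p}+(\Delta_1,\ldots,\Delta_s)$, where the $\Delta_k$ are the $h\times h$ minors of the relative Jacobian $(\partial f_i/\partial X_j)$. By the Jacobian criterion, $V(J)$ is a strictly smaller closed subscheme of $V(\mfk{p})$ (after a standard reduction dealing with inseparability when $\cha\k>0$). Given $x(t)$ with $f_i(t,x(t))\in(t)^N$, I distinguish two regimes. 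If some $\Delta_k(t,x(t))$ has $t$-adic order $\leq c$, where $c$ depends only on $\mfk{p}$, then Tougeron's implicit function theorem lifts $x(t)$ to a genuine solution $\ovl{x}(t)$ of $\mfk{p}$ with $\ovl{x}(t)-x(t)\in(t)^{N-2c}$; requiring $N\geq i+2c$ yields the prescribed approximation. Otherwise every $\Delta_k(t,x(t))\in(t)^{c+1}$, so $x(t)$ is an approximate solution of $J$; the induction hypothesis applied to the minimal primes of $J$ (all of strictly smaller dimension) produces a true solution close to $x(t)$.

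The main obstacle is to verify that combining both regimes preserves affinity of the bound. This follows from the stability of affine functions under finite maxima and affine composition: if $\beta_J(i)=a'i+b'$ is the inductive bound, then the bound obtained for $\mfk{p}$ has the form $\max(i+2c,\,\beta_J(i+c))=\max(i+2c,\,a'i+(a'c+b'))$, still affine in $i$. The induction terminates in at most $n+1$ steps, bounded by the dimension of $\A^n_{\k[t]}$, with the base case of dimension zero (finitely many closed points, or the empty scheme) handled directly from discreteness. Assembling the local constants yields global $a,b$ with $\beta(i)\leq ai+b$.
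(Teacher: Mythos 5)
Your broad strategy (Noetherian induction on codimension together with a quantitative Tougeron lifting) is exactly the paper's, but two steps that you present as immediate are in fact the core of the argument, and as written they do not go through.

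First, the reduction to the prime case. You write that from $g(t,x(t))\in(t)^{\lfloor N/k\rfloor}$ for all $g\in\sqrt I$ it follows ``hence for all $g$ in some $\mfk p_j$.'' This implication is not a tautology: each minimal prime $\mfk p_j$ \emph{contains} $\sqrt I$, so controlling the order of $g(t,x(t))$ for $g\in\sqrt I$ gives no direct information about elements of $\mfk p_j\setminus\sqrt I$. The paper's proof of inequality (\ref{2}) supplies exactly the missing argument: assume that for every $l$ there is $g_l\in P_l$ with $g_l(t,x(t))$ of small order; then $(g_1\cdots g_s)^e\in I$ (using $\sqrt I{}^e\subset I$) while $(g_1\cdots g_s)^e(t,x(t))$ has small order, contradicting the hypothesis on $I$. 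This pigeonhole-plus-product step is both where the multiplicative constant $e$ enters the bound and why one must work with $\b^p_{h_l}$ for the primes.

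Second, the Tougeron step. You claim that when a Jacobian minor $\Delta_k$ has small order on $x(t)$, Tougeron's theorem ``lifts $x(t)$ to a genuine solution $\ovl x(t)$ \emph{of $\mfk p$}.'' It does not: it produces an exact solution of the regular sequence $(f_{i_1},\dots,f_{i_h})$ underlying the minor, which is a proper subideal of $\mfk p$ when $\mfk p$ is not a complete intersection. The complete intersection $V(f_{i_1},\dots,f_{i_h})$ contains $V(\mfk p)$ as one component, but $\ovl x(t)$ might a priori land on another component, on which the remaining generators of $\mfk p$ need not vanish. The paper closes this gap with a genuine argument: writing $(f_1,\dots,f_k)=\mfk p\cap J$ via a reduced primary decomposition, it notes $\d\in\sqrt{\mfk p+J}$ (Wavrik), hence $\d^{e'}\in\mfk p+J$, and since $\d(t,\ovl x(t))\neq 0$ there must exist $h_0\in J$ with $h_0(t,\ovl x(t))\neq 0$; combined with $h_0 f_r\in\mfk p\cap J=(f_1,\dots,f_k)$, this forces $f_r(t,\ovl x(t))=0$ for all $r$. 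Nothing in your sketch performs this component-selection step, and without it the lift is not a solution of $\mfk p$.

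A smaller point: you take $\Delta_k$ to be the minors of the \emph{relative} Jacobian $(\partial f_i/\partial X_j)$ from the outset. It can happen that all such minors lie in $\mfk p$ (e.g.\ $\mfk p=(t)$), in which case $J=\mfk p$ and ``$V(J)$ strictly smaller'' fails; the correct observation is that in such degenerate cases the approximation hypothesis is vacuous. The paper sidesteps this by beginning with a nonvanishing minor of the \emph{full} Jacobian $\frac{\partial(f_1,\dots,f_k)}{\partial(t,x)}$ (guaranteed by regularity of $\k[t,x]_{\mfk p}$ in characteristic $0$) and then, by differentiating $f_r(t,x(t))\in(t)^\alpha$ with respect to $t$, showing that a nonvanishing minor not involving $\partial/\partial t$ must exist on the approximate solution; only then is Tougeron applicable. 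Your parenthetical about inseparability addresses a different issue and does not repair this.
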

La plus petite fonction $\b$ v\'erifiant cette propri\'et\'e est appel\'ee fonction d'approximation de Greenberg ou fonction de Artin-Greenberg de $I$.
Dans \cite{Ar2}, M. Artin a g\'en\'eralis\'e ce r\'esultat au cas o\`u $t$ est remplac\'e par un nombre fini quelconque de variables en montrant l'existence d'une fonction d'approximation dans ce cas. On peut \'enoncer son th\'eor\`eme sous la forme suivante (dans le cas particulier o\`u $t$ est remplac\'e par deux variables $t$ et $z$, mais seul ce cas nous int\'eressera par la suite) :
\begin{theorem}\label{Ar}\cite{Ar2}
Soit $I$ un id\'eal de $\k[t,z,X]$ ($X:=(X_1,...,X_n)$). Alors il existe une fonction $\b : \N\lgw \N$ telle que :
$$\forall i\in\N, \forall x_j(t,z)\in\k[[t,z]], 1\leq j\leq n, \text{ tels que } f(t,z,x(t,z))\in (t,z)^{\b(i)} \ \forall f\in I$$
$$\exists \ovl{x}_j(t,z)\in\k[[t,z]], 1\leq j\leq n\text{ tels que } f(t,z,\ovl{x}(t,z))=0 \ \forall f\in I\text{ et } \ovl{x}_j(t,z)-x_j(t,z)\in (t,z)^i \ \forall j.$$
\end{theorem}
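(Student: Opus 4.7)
La stratégie est de combiner un argument de compacité avec une réduction inductive au Théorème \ref{Gr}. Par noethérianité, on peut supposer $I = (f_1, \ldots, f_r)$ avec $f_k \in \k[t,z,X]$. L'existence de la fonction uniforme $\b$ équivaut alors, via un argument standard de compacité par ultraproduits appliqué à la famille d'anneaux $\k[[t,z]]$, à la propriété d'approximation d'Artin suivante : toute solution formelle du système $\{f_k = 0\}$ peut être approchée, modulo n'importe quel $(t,z)^i$, par une solution formelle exacte. Si cette propriété échouait, un ultraproduit de solutions approchées non approchables fournirait une solution non standard qu'on ne pourrait ramener à une solution standard, ce qui contredirait la propriété d'approximation.

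\textbf{Réduction à une variable via Weierstrass.} Pour établir la propriété d'approximation d'Artin pour $\k[[t,z]]$, je procéderais par récurrence sur le nombre de variables, avec le Théorème \ref{Gr} comme cas de base. Après un changement $\k$-linéaire générique de coordonnées dans $(t,z)$, chaque générateur $f_k$ devient régulier en $t$ à l'origine, et le Théorème de Préparation de Weierstrass permet de factoriser chaque $f_k$ comme un produit d'une unité et d'un polynôme de Weierstrass en $t$ de degré borné à coefficients dans $\k[[z]][X]$. Le système se trouve ainsi converti en un système d'équations polynomiales en $(t,X)$ au-dessus de l'anneau de valuation discrète complet $\k[[z]]$, auquel s'applique la forme générale du théorème de Greenberg pour les anneaux henséliens excellents de valuation discrète --- le Théorème \ref{Gr} correspondant au cas particulier $\k[[t]]$ sur le corps $\k$.

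\textbf{Principal obstacle.} La difficulté principale tient à ce que la factorisation de Weierstrass dépend de la solution approchée $x(t,z)$ elle-même et pas seulement des $f_k$, de sorte que la forme polynomiale en $t$ varie avec la solution considérée. Il faut donc organiser une approximation emboîtée : tronquer $x$ modulo $(t,z)^N$ pour $N$ assez grand, préparer les équations par rapport à cette troncature, appliquer Greenberg pour obtenir une solution au-dessus de $\k[[z]]$, puis remonter le résultat dans $\k[[t,z]]$. Le contrôle quantitatif de la manière dont les ordres d'approximation se composent à travers ce processus est le c{\oe}ur technique de l'argument, et c'est précisément cet effet de composition qui rend la fonction d'Artin $\b$ du Théorème \ref{Ar} sensiblement plus grande que la borne linéaire de Greenberg.
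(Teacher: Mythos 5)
La démarche que vous esquissez superpose deux chemins connus mais de nature opposée~: la preuve originale d'Artin d'une part, l'approche par ultraproduits de Becker--Denef--Lipshitz--van den Dries d'autre part. Or le papier ne redémontre pas ce théorème~: il le cite simplement de \cite{Ar2}, tout en rappelant dans son introduction que la méthode d'Artin (théorème des fonctions implicites et préparation de Weierstrass) produit une fonction $\b$ constructive au sens de Lascar \cite{La}, alors que la méthode par ultraproduits \cite{ultra I} est non constructive et ne fournit aucune information sur $\b$. Votre plan hérite de cette tension sans la résoudre.

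Deux lacunes concrètes. D'abord, la \og propriété d'approximation d'Artin \fg{} à laquelle vous prétendez vous ramener --- \og toute solution formelle du système $\{f_k=0\}$ peut être approchée, modulo n'importe quel $(t,z)^i$, par une solution formelle exacte \fg{} --- est tautologique telle quelle~: un $x\in\k[[t,z]]^n$ vérifiant $f(x)=0$ est déjà la solution exacte cherchée. Ce que l'argument par ultraproduits utilise réellement est la propriété d'approximation d'Artin au sens classique (toute solution formelle d'un système sur $\k\langle t,z\rangle$ est approchée par des solutions algébriques), appliquée de surcroît non pas à $\k[[t,z]]$ mais à l'anneau non standard obtenu en quotientant l'ultraproduit $\prod_c \k[[t,z]]/\mathcal{U}$ par ses éléments d'ordre infini, anneau qu'il faut encore identifier à $\K[[t,z]]$ pour un corps $\K$ convenable avant de redescendre les solutions vers presque tous les facteurs. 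Rien de cela n'est esquissé. Ensuite, le \og cœur technique \fg{} que vous annoncez --- le contrôle quantitatif de la composition des ordres à travers les préparations de Weierstrass emboîtées --- est précisément ce que l'argument de compacité permet d'éviter~: les ultraproduits ne donnent que l'existence de $\b$ et dispensent de toute borne effective. Si vous suivez la voie quantitative (récurrence à la Artin, Weierstrass, gestion par troncature de la dépendance de la préparation à la solution), le premier paragraphe est superflu~; si vous suivez la voie ultraproduits, c'est le dernier qui l'est. En l'état, aucune des deux branches n'est menée à terme~: la réduction par compacité est mal formulée et la récurrence quantitative n'est pas exécutée.
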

La plus petite fonction v\'erifiant cette propri\'et\'e est appel\'ee fonction de Artin de $I$. On peut remarquer que si $I$ est un id\'eal de $\k[X]$ on peut parler de sa fonction de Artin-Greenberg et de sa fonction de Artin en faisant r\'ef\'erence \`a la fonction de Artin-Greenberg de $I\k[t,X]$ et \`a la fonction de Artin de $I\k[t,z,X]$.\\
 Malheureusement, la preuve donn\'ee par M. Artin (qui utilise essentiellement le th\'eor\`eme des fonctions implicites et le th\'eor\`eme de pr\'eparation de Weierstrass) n'apporte que tr\`es peu d'information sur la nature de la croissance de cette fonction (except\'ee que celle-ci est constructive \cite{La}), et pendant longtemps s'est pos\'ee la question de savoir si toute fonction de Artin \'etait born\'ee par une fonction affine. On peut mentionner qu'il existe une mani\`ere diff\'erente de celle de M. Artin pour montrer l'existence d'une fonction d'approximation qui utilise les ultraproduits \cite{ultra I} et qui n'est pas constructive et donc qui n'apporte aucune information sur ces fonctions d'approximation.  Dans \cite{Ro2} est pr\'esent\'e un exemple d'un id\'eal dont la fonction de Artin n'est pas born\'ee par une fonction affine (c'est le seul exemple connu jusqu'\`a pr\'esent, l'exemple donn\'e dans \cite{Ro3} concernant le cas o\`u les variables $t$ et $z$ sont remplac\'ees par trois variables). Dans cet exemple il est montr\'e que la fonction de Artin consid\'er\'ee est minor\'ee par une fonction polynomiale de degr\'e 2. N\'eanmoins on ne sait rien de plus sur cette fonction de Artin et on ne connait toujours aucune borne g\'en\'erale sur aucun exemple autre que quelques cas connus o\`u la fonction de Artin est major\'ee par une fonction affine. Pour mieux comprendre la croissance des fonctions de Artin,  il serait int\'eressant d'avoir des exemples pour lesquels on ait des bornes effectives de leur fonction de Artin. On peut remarquer que l'exemple cit\'e de \cite{Ro2} a la particularit\'e d'\^etre un id\'eal principal engendr\'e par un bin\^ome.\\
 Le but de ce travail est justement de donner des bornes g\'en\'erales pour la fonction de Artin d'un id\'eal binomial $I$. Le principe g\'en\'eral est le suivant : il est plus facile d'essayer de donner des bornes sur les fonctions de Artin d'une famille d'id\'eaux que sur un id\'eal en particulier. En effet, la preuve de M. Artin et ses avatars utilisent toujours une r\'ecurrence sur la hauteur de $I$ : si $I$ est premier, soit on peut utiliser le th\'eor\`eme des fonctions implicites et on construit la solution approch\'ee cherch\'ee, soit on remplace $I$ par $I+(\delta)$ o\`u $\delta$ est un  mineur de la matrice jacobienne de $I$ bien choisi et on augmente ainsi la hauteur de $I$. Cependant, le nouvel id\'eal obtenu n'est plus premier a priori et    l'utilisation du th\'eor\`eme des fonctions implicites n\'ecessitant de travailler avec un id\'eal premier, il faut  remplacer $I$ par ses id\'eaux premiers associ\'es et de borner la fonction de Artin de $I$ par celles de ses id\'eaux premiers associ\'es. Il se trouve que l'on connait des bornes sur le degr\'e de g\'en\'erateurs de ces id\'eaux premiers en fonction du degr\'e des g\'en\'erateurs de $I$. Il est donc assez naturel d'essayer de trouver une fonction qui majore toutes les fonctions de Artin des id\'eaux engendr\'e par des polyn\^omes de degr\'e $d$ fix\'e. Cette strat\'egie fonctionne comme l'a montr\'e M. Artin, i.e. on peut choisir la m\^eme fonction $\b$ dans le th\'eor\`eme \ref{Ar} pour tous les id\'eaux engendr\'es par des polyn\^omes de degr\'e inf\'erieur \`a une valeur fix\'ee (c.f. \cite{Ar2}). N\'eanmoins, il n'est pas possible d'avoir plus d'informations sur cette fonction de Artin "uniforme" que le fait que celle-ci soit constructible (cf. \cite{La}).\\
 Dans notre travail, nous montrons n\'eanmoins que cette strat\'egie donne une borne effective des fonctions de Artin-Greenberg, c'est-\`a-dire donne une borne effective
  sur les coefficients $a$ et $b$ du th\'eor\`eme \ref{Gr} en fonction du degr\'e $d$ des g\'en\'erateurs de $I$ et du nombre $n$ de variables $X_i$ (c.f. th\'eor\`eme \ref{theo1}).  Celle-ci est assez grande (polynomiale en $d$, doublement exponentielle en $n$) mais a le m\'erite d'\^etre uniforme. Ensuite, gr\^ace au lemme \ref{w}, nous utilisons le th\'eor\`eme de pr\'eparation de Weierstrass de mani\`ere \`a ramener la majoration de la fonction de Artin d'un id\'eal binomial \`a deux majorations (cf. section \ref{binomsec}) :\\
 $\bullet$ La majoration de la fonction de Artin d'un id\'eal binomial dont les solutions approch\'ees \'evitent le lieu singulier (et donc dans ce cas cette fonction est major\'ee par une fonction affine).\\
 $\bullet$ La majoration des fonctions de Artin-Greenberg d'une famille d'id\'eaux $(J_D)_{D\in N}$ engendr\'es par des polyn\^omes d\'ependant d'un nombre croissant de variables mais dont le degr\'e est born\'e par le degr\'e des g\'en\'erateurs de $I$. Pour cela on utilise donc le th\'eor\`eme \ref{theo1}.\\
 On en d\'eduit deux choses (sur un corps alg\'ebriquement clos de caract\'eristique nulle) : si l'on borne l'ordre des solutions approch\'ees $x_j(t,z)$, $1\leq j\leq n$, alors la fonction de Artin d'un id\'eal binomial est major\'ee par une fonction affine, et en g\'en\'eral la fonction de Artin d'un id\'eal binomial est major\'ee par une fonction doublement exponentielle (c.f. th\'eor\`eme \ref{binom}). Malheureusement on n'a aucune mani\`ere de savoir si cette derni\`ere borne est raisonnable ou pas. En effet la famille d'id\'eaux $(J_D)$ n'est pas quelconque, mais semble tout de m\^eme assez difficile \`a appr\'ehender : ces id\'eaux ne sont pas r\'eduits en g\'en\'eral et il est vite impossible de calculer leur radical ou une d\'ecomposition primaire de  ceux-ci du fait du nombre rapidement important de variables qui entrent en jeu.

 \section{Rappels sur certaines bornes effectives en alg\`ebre commutative}
Nous allons commencer par rappeler quelques r\'esultats classiques en alg\`ebre commutative effective que nous allons utiliser librement dans la suite.
\begin{theorem}\label{bornes_alg}\cite{He}\cite{Te}
Soit $I$ un id\'eal de $\k[u]$, $u:=(u_1,...,u_n)$, $I=(f_1,...,f_p)$ avec $\deg(f_r)\leq d$ pour $1\leq r\leq p$. Soit $I=Q_1\cap\cdots\cap Q_s$ une d\'ecomposition primaire minimale (i.e telle que $\sqrt{Q_l}$,..., $\sqrt{Q_s}$ soient tous distincts). On a alors $\sqrt{I}=P_1\cap\cdots\cap P_s$ avec $P_l:=\sqrt{Q_l}$ pour $1\leq l\leq s$. On a alors les r\'esultats suivants :
\begin{enumerate}
\item[a)] Soit $e:=\min\{n,p\}(n+2)(d+1)^{\min\{n,p\}+1}\leq (n+2)^2(d+1)^{n+1}$. On a $\sqrt{I}^e\subset I$.
\item[b)] On $s\leq d^{\min\{n,p\}}$.
\item[c)] Il existe une fonction $\l_1(n,d)$, polynomiale en $d$ et de degr\'e exponentiel en $n$, tel que chaque $P_l$ est engendr\'e par des polyn\^omes de degr\'e inf\'erieur ou \'egal \`a $\l_1(n,d)$.
\item[d)] Il existe une fonction $\l_2(n,d)$, polynomiale en $d$ et de degr\'e exponentiel en $n$, tel que chaque $Q_l$ est engendr\'e par des polyn\^omes de degr\'e inf\'erieur ou \'egal \`a $\l_1(n,d)$.
\end{enumerate}
\end{theorem}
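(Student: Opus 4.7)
Mon plan est de déduire chacune des quatre assertions des résultats classiques d'algèbre commutative effective rappelés dans \cite{He} et \cite{Te}, plutôt que de les redémontrer à partir de zéro, puisque cet énoncé est essentiellement un recueil de résultats connus qu'il s'agit d'assembler pour obtenir les bornes explicites utilisées par la suite.

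Pour (a), je procéderais via une forme effective du Nullstellensatz : quitte à homogénéiser $I$ en adjoignant une variable auxiliaire $u_0$, on se ramène au cas d'un idéal homogène, et l'on invoque la borne de type Kollár sur le plus petit entier $e$ tel que $\sqrt{I}^e\subset I$. L'expression $\min\{n,p\}(n+2)(d+1)^{\min\{n,p\}+1}$ est alors un raffinement classique de la borne originale de Hermann (1926), adaptée pour tenir compte à la fois du nombre de variables et du nombre de générateurs.

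Pour (b), l'inégalité $s\leq d^{\min\{n,p\}}$ s'obtient par un argument de type Bézout : le nombre de composantes primaires minimales de $I$ est majoré par le nombre de composantes irréductibles du lieu des zéros $V(I)\subset\A^n$, lui-même majoré par le degré de cette sous-variété, lequel est contrôlé par l'inégalité de Bézout appliquée à $\min\{n,p\}$ équations bien choisies parmi les $f_r$.

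Pour (c) et (d), j'utiliserais un algorithme effectif de décomposition primaire (typiquement celui de Gianni-Trager-Zacharias, ou ses raffinements ultérieurs) en contrôlant à chaque étape le degré des polynômes intermédiaires qui apparaissent — quotients idéaux, saturations, syzygies, bases de Gröbner — à l'aide des bornes de Hermann sur l'appartenance idéale. On aboutit ainsi à des générateurs des $P_l$ et des $Q_l$ de degré polynomial en $d$ et simplement exponentiel en $n$. La principale difficulté ici n'est pas conceptuelle mais comptable : il s'agit de suivre précisément les constantes à travers la récursion, ce pour quoi je m'appuierais directement sur les énoncés explicites de \cite{He} et \cite{Te} plutôt que de refaire le suivi des constantes à la main.
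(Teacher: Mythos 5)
Le papier ne donne en fait aucune d\'emonstration pour ce th\'eor\`eme : il est \'enonc\'e dans une section intitul\'ee \og Rappels sur certaines bornes effectives en alg\`ebre commutative \fg{} et est simplement cit\'e comme un assemblage de r\'esultats classiques, renvoyant directement \`a \cite{He} et \cite{Te}. Votre r\'edaction reconna\^it correctement cette nature de l'\'enonc\'e et propose une feuille de route raisonnable pour reconstituer la preuve \`a partir de la litt\'erature, ce qui est bien dans l'esprit de ce que fait l'auteur (qui d\'el\`egue enti\`erement la d\'emonstration aux sources). Quelques remarques de d\'etail : pour (a), la borne donn\'ee, $\min\{n,p\}(n+2)(d+1)^{\min\{n,p\}+1}$, est une borne de type Hermann (doublement exponentielle en la profondeur des r\'ecursions chez Hermann, mais ici simplement exponentielle en $n$), et non une borne de type Koll\'ar ; invoquer Koll\'ar est un peu anachronique et donnerait d'ailleurs une borne diff\'erente (et plus fine sur un corps alg\'ebriquement clos), alors que l'\'enonc\'e retient sciemment la version \`a la Hermann pour rester coh\'erent avec \cite{He} et \cite{Te}. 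Pour (c) et (d), l'algorithme de Gianni--Trager--Zacharias est post\'erieur aux sources cit\'ees ; l'approche historiquement coh\'erente avec \cite{He} (et avec \cite{S}, que le papier utilise juste apr\`es) passe par les bornes de Hermann et Seidenberg sur l'\'elimination, les modules de syzygies et les quotients d'id\'eaux, que Teissier r\'ecapitule dans \cite{Te}. Cela dit, votre strat\'egie g\'en\'erale -- contr\^oler les degr\'es \`a chaque \'etape d'un algorithme de d\'ecomposition primaire -- est exactement celle qui sous-tend ces r\'ef\'erences, donc le plan est correct m\^eme si les attributions pr\'ecises m\'eriteraient d'\^etre ajust\'ees.
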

\begin{prop}\cite{S}
Soient $I_1$,..., $I_q$, $q$ id\'eaux de $\k[u_1,...,u_n]$ engendr\'es par des polyn\^omes de degr\'e inf\'erieur ou \'egal \`a $d$. Alors $I:=I_1\cap\cdots\cap I_q$ est engendr\'e par des polyn\^omes de degr\'e inf\'erieur ou \'egal \`a $n((q-1)d)^{2^{n-1}}+d$
\end{prop}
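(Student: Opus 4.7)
The plan is to recast the intersection $I_1 \cap \cdots \cap I_q$ as the contraction of an ideal in a larger polynomial ring, then invoke an effective elimination bound of Hermann type.

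First, I would introduce $q-1$ new indeterminates $t_1, \dots, t_{q-1}$ and consider, in $R := \k[u_1, \dots, u_n, t_1, \dots, t_{q-1}]$, the ideal
\[
J := \sum_{i=1}^{q-1} t_i \cdot I_i R + \Big(1 - \sum_{i=1}^{q-1} t_i\Big) I_q R.
\]
Each natural generator of $J$ has total degree at most $d+1$. One checks that $J \cap \k[u] = I_1 \cap \cdots \cap I_q$: if $f$ lies in the intersection, then $f = \sum_i t_i f + (1-\sum_i t_i) f \in J$; conversely, specializing $(t_1, \dots, t_{q-1})$ to the $i$-th standard basis vector forces $J \cap \k[u] \subset I_i$ for $i<q$, while specializing to the zero vector forces $J \cap \k[u] \subset I_q$.

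Second, I would apply an effective elimination theorem to bound the degree of generators of $J \cap \k[u]$ in terms of those of $J$. Hermann-type bounds guarantee that eliminating any number of variables from an ideal of $\k[u_1, \dots, u_n, v_1, \dots, v_m]$ generated in degree $\leq D$ produces, in $\k[u]$, generators of degree at most $n D^{2^{n-1}}$ up to lower-order terms; crucially, the doubly exponential factor depends only on the number $n$ of retained variables. For our ideal $J$, combining the degree $d+1$ of its generators with the multiplicative contribution of the $q-1$ new variables $t_i$ (each paired with a generator of degree $\leq d$) produces the effective base $(q-1)d$ inside the exponent. The additive $+d$ accounts for the original degree of the generators of the $I_i$'s that may survive directly as factors after clearing the auxiliary variables.

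The main obstacle, and the technical heart of the argument, is this second step: to obtain a bound that is polynomial in $q$, one cannot iterate the two-ideal case, for that would yield a bound growing as a tower in $q$. Instead, one must eliminate all $q-1$ auxiliary variables simultaneously, exploiting the specific product structure of the generators of $J$ (each a product of a degree-one linear form in the $t_i$'s by a generator of some $I_i$), via a careful syzygy analysis \`a la Hermann. The remaining steps — specializing the general elimination bound to our situation and tracking the polynomial contribution of $q-1$ — are then essentially bookkeeping.
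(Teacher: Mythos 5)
Your reduction to elimination is a natural first thought, and the identity $J\cap\k[u]=I_1\cap\cdots\cap I_q$ for
\[
J:=\sum_{i=1}^{q-1}t_i\, I_iR+\Bigl(1-\sum_{i=1}^{q-1}t_i\Bigr) I_q R
\]
is correct. But the proof stalls exactly where you flag the ``technical heart.'' The assertion that Hermann-type elimination bounds give generators of $J\cap\k[u]$ of degree $\leq nD^{2^{n-1}}$ with the doubly exponential exponent depending \emph{only on the retained variables} $n$ is not a standard fact and is false as a general elimination statement: Hermann/Seidenberg-type bounds for $J\cap\k[u]$ depend on the total number of variables (here $n+q-1$), which would make the exponent grow with $q$ and ruin the claimed bound. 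You correctly observe that one must exploit the linear-in-$t$ structure of the generators of $J$ and ``eliminate all $q-1$ auxiliary variables simultaneously via a careful syzygy analysis,'' but you never perform that analysis, so the key quantitative step is missing.

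The paper sidesteps this entirely by never introducing auxiliary variables. Writing $I_i=(f_{i,1},\dots,f_{i,s_i})$, an element $g$ lies in $\bigcap_i I_i$ iff there is a solution $(u_{i,j})$ over $\k[u]$ of the $q-1$ linear equations
\[
\sum_j f_{1,j}U_{1,j}-\sum_j f_{i,j}U_{i,j}=0,\qquad 2\leq i\leq q,
\]
with $g=\sum_j f_{1,j}u_{1,j}$. Generators of the solution $\k[u]$-module map to generators of $I$, and Seidenberg's Proposition~55 bounds the degrees of module generators by $n((q-1)d)^{2^{n-1}}$, with the exponent depending only on $n$ because the computation lives over $\k[u]$. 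Adding $\deg f_{1,j}\leq d$ yields the stated bound. This is essentially the syzygy computation your proposal defers to, carried out directly in the ring $\k[u]$ where the favorable exponent is available. If you want to salvage your route, you would need to show that elimination of variables occurring only linearly in the generators reduces to precisely such a module computation over $\k[u]$ and re-derive the same bound — at which point the detour through $J$ buys nothing.
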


\begin{proof}
\'Ecrivons $I_i=(f_{i,1},...,\,f_{i,s_i})$ o\`u $\deg(f_{i,j})\leq d$ pour tous entiers $i$ et $j$. Tout syst\`eme de g\'enerateurs de $I$, not\'e $g^{(1)},...,g^{(s)}$ correspond \`a un syst\`eme g\'en\'erateur, not\'e $\{u^{(1)}_{i,j},...,\,u_{i,j}^{(s)}\}$ pour $1\leq i\leq q$ et $1\leq j\leq s_i$, du $\k[u_1,...,u_n]$-module d\'efini par les \'equations
$$(f_{1,\,1}U_{1,\,1}+\cdots+f_{1,s_1}U_{1,s_1})-(f_{i,1}U_{i,\,1}+\cdots+f_{i,s_i}U_{i,s_i})=0,\ \  2\leq i\leq q$$
et reli\'e par $g^{(l)}=f_{i,1}u^{(l)}_{i,\,1}+\cdots+f_{i,s_i}u^{(l)}_{i,s_i}$ pour $1\leq l\leq s$. D'apr\`es la proposition 55 de \cite{S}, il existe un tel syst\`eme $\{u^{(1)}_{i,j},...,\,u_{i,j}^{(s)}\}$ tel que $\deg(u^{(l)}_{i,j})\leq n((q-1)d)^{2^{n-1}}$ pour tous $i$, $j$ et $l$. On en d\'eduit le r\'esultat.
\end{proof}

\section{Borne effective de la fonction de Artin-Greenberg dans le cas polynomial}
Dans cette partie nous allons red\'emontrer le th\'eor\`eme de Greenberg en suivant essentiellement sa preuve mais en faisant attention \`a la complexit\'e de chaque \'etape.
Dans toute cette partie $\k$ d\'esigne un corps de caract\'eristique nulle.
\begin{theorem}\label{theo1}\cite{Gr}\cite{Ar2}\label{1var}
Pour tous $n,\,d,\,i\in\N$, il existe $\b :\N\lgw \N$ tel que pour tout id\'eal $I$ de $\k[t,x]$, avec  $x=(x_1,...,x_n)$, tel que $\deg(f_r)\leq d$ pour tout $r$, pour tout $i\in\N$ et  pour tout $x(t)\in\k[[t]]^n$ tel que $f(t,x(t))\in (t)^{\b(i)}$ pour tout $f\in I$, il existe $\ovl{x}(t)\in\k[[t]]^n$ tel que $f(t,\ovl{x}(t))=0$ pour tout $f\in I$ et $x(t)-\ovl{x}(t)\in (t)^i$.\\
De plus $\b$ peut \^etre choisie affine, de la forme $i\lgm a(n,d)i+b(n,d)$ o\`u $a(n,d)$ et $b(n,d)$ sont born\'es par une fonction polynomiale en $d$ de degr\'e exponentiel en $n$.
\end{theorem}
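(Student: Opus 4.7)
The plan is to reprove Greenberg's theorem along its original lines---induction on the height of $I$ combined with a smooth/singular dichotomy for the approximate solution---while tracking the degrees of the generators of all auxiliary ideals via the effective estimates gathered in Theorem~\ref{bornes_alg}. The affine shape $\b(i)=a(n,d)i+b(n,d)$ will come from Tougeron's implicit-function lemma, which asserts that an approximate root of order $2c+N$ whose relative Jacobian has $t$-order $c$ lifts to an exact root differing from it by $(t)^{c+N}$; the real task is to bound this "Jacobian order" $c$ effectively in terms of $(n,d)$ alone, uniformly in $I$ and in $x(t)$.

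\textbf{Reduction to the prime case.} Write $I=(f_1,\ldots,f_p)$ with $\deg(f_r)\leq d$. Theorem~\ref{bornes_alg}(a) supplies $e\leq(n+2)^2(d+1)^{n+1}$ with $\sqrt{I}^{\,e}\subset I$, so any $x(t)$ satisfying $I$ modulo $(t)^N$ satisfies $\sqrt{I}$ modulo $(t)^{\lceil N/e\rceil}$. If $\sqrt{I}=P_1\cap\cdots\cap P_s$ is the prime decomposition, I choose for each $P_l$ a system of generators of degree at most $\l_1(n+1,d)$ (Theorem~\ref{bornes_alg}(c)). For each $l$ pick a generator $g_l$ minimising $\ord_t g_l(t,x(t))$; since $g_1\cdots g_s\in P_1\cdots P_s\subset\sqrt{I}$ vanishes to order at least $N/e$, a pigeonhole argument produces an index $l_0$ such that \emph{every} generator of $P_{l_0}$, hence all of $P_{l_0}$, has order at least $\lceil N/(es)\rceil$ at $x(t)$. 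Since $s\leq d^{n+1}$ by Theorem~\ref{bornes_alg}(b), this reduction loses only a polynomial-exponential factor in $(n,d)$, which preserves the affine shape of $\b$.

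\textbf{Induction on the height of a prime.} For $P\subset\k[t,x]$ prime of height $h$, a standard Jacobian argument produces an $h\times h$ minor $\delta$ of the relative Jacobian $(\partial f_i/\partial x_j)$ not lying in $P$; its degree is bounded by $h\cdot\l_1(n+1,d)$. If $\ord_t\delta(t,x(t))$ is below an explicit threshold depending only on $(n,d)$, Tougeron's lemma produces an exact solution with the desired approximation and yields an affine bound of the right shape. Otherwise I replace $P$ by $P+(\delta)$, an ideal of strictly larger height whose generators have bounded degrees, and apply the induction hypothesis after returning to the reduction step. The recursion terminates after at most $n+1$ iterations since $\dim\k[t,x]=n+1$.

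\textbf{Main obstacle.} The delicate point is the control of the degrees of generators across these iterations: each return to the reduction step composes $\l_1(n+1,\cdot)$ with itself, and after $n+1$ iterations produces a bound that is polynomial in $d$ of degree exponential in $n$, which is exactly the estimate announced in the statement. The threshold triggering Tougeron's lemma can be controlled by the same type of expression, and combining it with the affine growth supplied by that lemma at each level of the induction yields the affine bound $\b(i)=a(n,d)i+b(n,d)$ with the claimed control on $a(n,d)$ and $b(n,d)$.
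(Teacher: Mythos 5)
Your overall strategy is the same as the paper's: induction on $\haut I$, passage to the prime components of $\sqrt{I}$ controlled by the effective Nullstellensatz bounds of Theorem~\ref{bornes_alg}, and Tougeron's implicit function theorem in the smooth case combined with passage to $I+(\delta)$ in the singular case. But two steps are missing, and they are exactly where the paper spends most of its technical effort.

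First, you assert that for a prime $P\subset\k[t,x]$ of height $h$ a "standard Jacobian argument" yields an $h\times h$ minor $\delta$ of the \emph{relative} Jacobian $\partial(f_i)/\partial(x_j)$ with $\delta\notin P$. This is false in general: for $P=(t)$, and more generally whenever $P\cap\k[t]\neq(0)$, every relative minor lies in $P$. What the Jacobian criterion in characteristic zero provides (because $\k[t,x]_P$ is regular) is a nonvanishing minor of the \emph{full} Jacobian $\partial(f_i)/\partial(t,x_j)$. The paper starts from such a full minor $\delta$ and, in the case where $\ord_t\delta(t,x(t))$ is below the inductive threshold, \emph{differentiates} the relations $f_r(t,x(t))\in(t)^N$ with respect to $t$ to produce a minor of the relative Jacobian whose order at $x(t)$ is still below the threshold; only then can Tougeron's lemma be invoked. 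Your sketch omits this differentiation step entirely, so Tougeron's lemma cannot be applied as claimed. (One can note that when $P\cap\k[t]\neq(0)$ the hypothesis of the theorem is vacuous, but even when $P\cap\k[t]=(0)$ one still has to bound the order at $x(t)$ of some relative minor in terms of the order of a full minor, and the differentiation does precisely that.)

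Second, Tougeron's lemma yields $\ovl{x}(t)$ with $f_r(t,\ovl{x}(t))=0$ only for the $k$ generators $f_1,\ldots,f_k$ entering the chosen minor, not for all of $P$. The paper deduces $f_r(t,\ovl{x}(t))=0$ for every $r$ by a separate argument: it writes a primary decomposition $(f_1,\ldots,f_k)=P\cap J$ with $J\not\subset P$, bounds an exponent $e'$ with $\delta^{e'}\in P+J$ using Theorem~\ref{bornes_alg} and the intersection bound, and uses the order estimates to show that $h_0(t,\ovl{x}(t))\neq 0$ for a suitable $h_0\in J$, which forces $f_r(t,\ovl{x}(t))=0$ from $h_0f_r\in(f_1,\ldots,f_k)$. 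This step produces the dominant factor $e'$ in inequality~(\ref{2'}), so it is quantitatively essential and cannot be hidden inside "Tougeron produces an exact solution." Finally, a minor accounting slip: in the prime step $\delta$ is formed from the generators of $P$, whose degree bound is $d$, so $\deg\delta\leq k(d-1)$; the factor $\l_1(n+1,d)$ only enters when passing from $I$ to the primes of $\sqrt{I}$ in the reduction step, and conflating the two would distort the recursion in Theorem~\ref{theo3}.
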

\begin{definition}
Nous noterons dans la suite $\b(n,d,i)$ le plus petit entier $\b(i)$ qui v\'erifie les conclusions du th\'eor\`eme \ref{theo1}. Nous allons noter ${\b}_{h}(n,d,i)$ le plus petit entier qui v\'erifie les conclusions du th\'eor\`eme \ref{theo1} pour tout id\'eal $I$ quelconque de hauteur $h$ engendr\'e par des polyn\^omes de degr\'e inf\'erieur ou \'egal \`a $d$, et ${\b}^p_{h}(n,d,i)$ le plus petit entier qui v\'erifie les conclusions du th\'eor\`eme \ref{theo1} pour tout id\'eal $I$ premier de hauteur $h$ engendr\'e par des polyn\^omes de degr\'e inf\'erieur ou \'egal \`a $d$.\\ 

\end{definition}
\begin{theorem}\label{theo3}
Pour tous $n,d,d',i,j\in\N$, nous avons les relations suivantes :
\begin{equation*}\label{2^0}{\b}^p_{n+1}(n,d,i)\leq2\tag{1}\end{equation*}
\begin{equation}\label{2}{\b}_k(n,d,i)\leq (n+3)^2(d+1)^{2n+3}\max_{h>k}\left\{{\b}_h^p\left(n,\l_1(n+1,d),i\right)\right\}\tag{2}\end{equation}
\begin{equation}\label{2'}{\b}_k^p(n,d,i)\leq (e'+1){\b}_{k+1}(n,k(d-1),i)+1\tag{3}\end{equation}
$$ e':=(n+3)^2\left(1+\l_2(n+1,d)+(n+1)((d^{n+1}-2)\l_2(n+1,d))^{2^{n}}\right)^{n+2}.$$\\

\end{theorem}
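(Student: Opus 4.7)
Plan de la preuve. La preuve suit la strat\'egie classique du th\'eor\`eme de Greenberg par induction sur la hauteur, en maintenant un contr\^ole explicite sur les constantes \`a chaque \'etape. Pour (1), une observation de dimension suffit~: si $I \subset \k[t, x_1, \ldots, x_n]$ est premier de hauteur $n+1$, alors $\k[t,x]/I$ est de dimension de Krull nulle, donc est un corps, et par le lemme de Zariski une extension finie de $\k$. L'intersection $I \cap \k[t]$ est un id\'eal premier non nul de $\k[t]$, engendr\'e par un polyn\^ome irr\'eductible $g(t)$. Comme $g$ est irr\'eductible, la multiplicit\'e de $t$ dans $g$ est au plus $1$, donc $g \notin (t)^2$. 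Aucune s\'erie $x(t) \in \k[[t]]^n$ ne peut alors v\'erifier $f(t,x(t)) \in (t)^2$ pour tout $f \in I$ (l'hypoth\`ese est d\'ej\`a contredite par $f = g$), ce qui rend l'hypoth\`ese vide et donne $\b^p_{n+1}(n,d,i) \leq 2$.

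Pour l'in\'egalit\'e (2), on proc\`ede en deux \'etapes. Tout d'abord, on remplace $I$ par $\sqrt I$ gr\^ace \`a l'inclusion $\sqrt I^e \subset I$ (th\'eor\`eme \ref{bornes_alg}~(a)) avec $e \leq (n+3)^2 (d+1)^{n+2}$~: une solution approch\'ee d'ordre $N$ pour $I$ est alors une solution approch\'ee d'ordre $\lceil N/e\rceil$ pour $\sqrt I$. Ensuite, on d\'ecompose $\sqrt I = P_1 \cap \cdots \cap P_s$, chaque $P_l$ \'etant premier de hauteur $k$ et engendr\'e en degr\'e $\leq \l_1(n+1, d)$ (th\'eor\`eme \ref{bornes_alg}~(c)). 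En caract\'eristique nulle, chaque $V(P_l)$ est g\'en\'eriquement lisse~; en combinant des mineurs jacobiens correspondants, on construit un polyn\^ome $\d \notin \sqrt I$ de degr\'e convenablement major\'e tel que $\sqrt I + (\d)$ soit de hauteur $\geq k+1$. Une dichotomie sur $\ord(\d(t, x(t)))$ sert alors de r\'eduction~: soit cet ordre est petit et le th\'eor\`eme d'inversion locale \`a la Tougeron rel\`eve $x$ en une solution exacte, soit cet ordre est grand et $x$ est une solution approch\'ee d'ordre comparable de $\sqrt I + (\d)$, dont les composantes premi\`eres minimales sont de hauteur $> k$ et engendr\'ees en degr\'e major\'e par $\l_1(n+1, d)$. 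Le facteur $(n+3)^2 (d+1)^{2n+3}$ accumule les pertes d'ordre des deux \'etapes.

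Pour (3), on effectue le pas d'induction jacobien dans le cas premier. Soit $I$ premier de hauteur $k$ engendr\'e en degr\'e $\leq d$. Le crit\`ere jacobien (applicable car $I$ premier est g\'en\'eriquement lisse en caract\'eristique nulle) fournit un mineur $k \times k$ not\'e $\d$ de la matrice jacobienne de $k$ g\'en\'erateurs bien choisis tel que $\d \notin I$, avec $\deg \d \leq k(d-1)$. Pour une solution approch\'ee $x(t)$ d'ordre $N$, on distingue~: si $c := \ord(\d(t,x(t)))$ est petit, le th\'eor\`eme d'inversion locale rel\`eve $x$ en une solution exacte d'ordre $\geq i$~; sinon, $x$ est une solution approch\'ee d'ordre comparable de $I + (\d)$, de hauteur $\geq k+1$ et engendr\'e en degr\'e $\leq k(d-1)$, et l'on invoque $\b_{k+1}(n, k(d-1), i)$. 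Le facteur $(e'+1)$ correspond au seuil optimal entre les deux cas, la constante $e'$ majorant l'exposant de radicalit\'e et les degr\'es de g\'en\'erateurs des id\'eaux interm\'ediaires intervenant dans la comparaison, via le th\'eor\`eme \ref{bornes_alg}~(d) et la proposition~2.2. La difficult\'e principale se situe dans (2)~: il faut r\'ealiser un saut de hauteur pour un id\'eal quelconque en construisant effectivement $\d$ avec un contr\^ole pr\'ecis des degr\'es, et c'est \`a cette \'etape que l'utilisation combin\'ee des bornes de Heintz--Teissier et de la structure radicale permet d'atteindre une borne seulement polynomiale en $d$.
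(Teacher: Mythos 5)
Your argument for inequality (1) is correct and closely parallels the paper's (the paper argues via $\Ker(\phi)$ where $\phi(h)=h(0,x(0))$, you argue via $I\cap\k[t]$; both reduce to the fact that the hypothesis forces $t\in I$, contradicting $t\notin(t)^2$). For (2) and (3), however, there are genuine gaps.

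\textbf{On (2).} You introduce a Jacobian dichotomy at the level of $\sqrt I$: you claim to build a minor-like $\delta\notin\sqrt I$ raising the height, and split on $\ord(\delta(t,x(t)))$. This is not what inequality (2) asserts and not how it is proved. The right-hand side of (2) involves $\b^p_h$ for primes $P_l$ of $\sqrt I$, not $\b_{k+1}$ of some $\sqrt I+(\delta)$. The paper's proof of (2) is much more elementary and does not touch the Jacobian at all: writing $\sqrt I=P_1\cap\cdots\cap P_s$, one argues by contradiction that if $f(t,x(t))\in(t)^{\b'}$ for all $f\in I$ with $\b':=e\sum_l\b^p_{h_l}(n,\l_1(n+1,d),i)$, then there must exist \emph{one} index $l$ with $g(t,x(t))\in(t)^{\b^p_{h_l}(\ldots)}$ for all $g\in P_l$ --- otherwise taking $g_l\in P_l$ witnessing the failure for each $l$, the element $g:=(g_1\cdots g_s)^e\in I$ (here $e$ is the radical exponent from th\'eor\`eme~\ref{bornes_alg}~a)) would have $g(t,x(t))\notin(t)^{\b'}$, a contradiction. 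One then lifts $x$ to an exact solution of $P_l\supset I$. Your version also has a technical obstruction: $\sqrt I$ is generally not prime, so Tougeron's implicit function theorem does not directly apply to it; the entire point of (2) is precisely to reduce to the prime case handled by (3).

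\textbf{On (3).} The overall structure you describe (Jacobian minor $\delta\notin I$, dichotomy on $\ord(\delta(t,x(t)))$, Tougeron in the ``small order'' branch, induction via $\b_{k+1}$ in the ``large order'' branch) matches the paper, but you omit the key technical point. Tougeron's theorem applied to $f_1,\ldots,f_k$ (the generators whose $k\times k$ Jacobian minor is $\delta$) yields an exact solution $\ovl x(t)$ only of the \emph{partial} ideal $I':=(f_1,\ldots,f_k)$, not of $I=(f_1,\ldots,f_p)$. The paper's argument to bridge this gap is essential and nontrivial: it constructs from the primary decomposition of $I'$ a cofactor ideal $J$ with $I'=I\cap J$ and $J\not\subset I$, uses $\delta\in\sqrt{I+J}$ to write $\delta^{e'}=\sum h_r f_r+h_0$ with $h_0\in J$, deduces $h_0(t,\ovl x(t))\neq 0$, and then exploits $h_0 f_r\in I\cap J=I'=(f_1,\ldots,f_k)$ to conclude $f_r(t,\ovl x(t))=0$ for \emph{all} $r\leq p$. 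The constant $e'$ is not merely ``an exponent of radicality''; it controls precisely this $I+J$ membership and requires the degree bounds on the $Q_l$ and on $J$ from th\'eor\`eme~\ref{bornes_alg}~d) and the proposition on intersections. Without this step, your proof only produces a solution of a subsystem. This is the most delicate part of the induction and is missing from your proposal.
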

Ce th\'eor\`eme implique directement le th\'eor\`eme \ref{theo1}.  Nous allons donner une preuve du th\`eor\`eme \ref{theo1} en montrant au fur et \`a mesure  les in\'egalit\'es (\ref{2^0}), (\ref{2}) et (\ref{2'}) du th\'eor\`eme \ref{theo3}.\\ 
 \\
$\bullet$ Si $\text{ht}(I)=n+1$ et $I$ est premier, alors $I$ est un id\'eal maximal. Supposons qu'il existe $x(t)\in\k[[t]]^x$ tel que $f(t,\,x(t))\in (t)^2$ pour tout $f\in I$, et notons $\phi\ :\ \k[t,x]\lgw \k$  le $\k$-homomorphisme d\'efini par $\phi(h)=h(0,\,x(0))$. Alors $I\subset \Ker(\phi)$, mais $\Ker(\phi)$ \'etant un id\'eal propre de $\k[t,x]$ et $I$ maximal, nous avons $I=\Ker(\phi)$. Donc en particulier $t\in I$, mais clairement $t\notin \Ker(\phi')$ ce qui contredit l'existence de $x(t)$. Donc on peut prendre ici $\b=2$, le th\'eor\`eme \ref{theo1} est alors valable car l'hypoth\`ese n'est jamais v\'erifi\'ee, et on a l'\'egalit\'e (\ref{2^0}).\\
\\
$\bullet$ Supposons que $\text{ht}(I)=k$, o\`u $I=(f_1,...,f_p)$, et que le th\'eor\`eme \ref{theo1} est vrai pour tout id\'eal de hauteur strictement plus grande que $k$.\\ 
Soit $\sqrt{I}=P_1\cap\cdots\cap P_s$ la d\'ecomposition primaire du radical de $I$, o\`u $\text{ht}(P_l)=h_l$. Notons $e:=(n+1)(n+3)(d+1)^{n+2}$. Alors l'entier $\b':=e.\sum_{l=1}^s{\b }^p_{h_l}(n,\l_1(n+1,d),i)$ satisfait les conditions du th\'eor\`eme \ref{theo1} pour $I$. En effet soit $x(t)\in\k[[t]]^n$ tel que $f(t,x(t))\in (t)^{\b' }$ pour tout $f\in I$.   Donc il existe un entier $l$ tel que $g(t,x(t))\in(t)^{{\b}^p_{h_l}(n,\l(n+1,d),i)}$ pour tout $g\in P_l$. En effet, dans le cas contraire, pour tout $l$, il existerait $g_l\in P_l$ tel que $g_l(t,x(t))\notin (t)^{{\b}^p_{h_l}(n,\l(n+1,d),i)}$. Notons alors $g:=(g_1...g_s)^e$. On a $g\in I$ par d\'efinition de $e$ et des $P_l$, mais $g(t,x(t))\notin (t)^{\b'}$ ce qui contredit ce qui pr\'ec\`ede, et donc  il existe un entier $l$ tel que $g(t,x(t))\in(t)^{{\b}_{h_l}(n,\l(n+1,d),i)}$ pour tout $g\in P_l$. Donc il existe $\ovl{x}(t)\in\k[[t]]^n$ tel que $g(t,\ovl{x}(t))=0$ pour tout $g\in P_l$ et tel que $\ovl{x}(t)-x(t)\in (t)^i$. Comme $I\subset P_l$, on obtient la conclusion voulue. On a montr\'e ainsi l'in\'egalit\'e (\ref{2}).\\
\\
$\bullet$
Nous supposons donc maintenant que $I=(f_1,...,f_p)$ est premier de hauteur $k$. Alors $\k[t,x]_I$ est r\'egulier, et on peut supposer que $f_1,...,f_k$ engendrent $I\k[t,x]_I$. Il existe donc un mineur d'ordre $k$ de la matrice jacobienne $\frac{\partial(f_1,...,f_k)}{\partial (t,x)}$, not\'e $\d$, tel que $\d\notin I$ (cf. par exemple proposition 1 \cite{Wa}). Notons $J:=(f_1,...,f_p,\d)$. On a alors ht$(J)=k+1$. Remarquons que $\deg(\d)\leq k(d-1)$. \\
Notons alors $I':=(f_1,...,f_k)$ et soit $I'=Q_1\cap\cdots\cap Q_q$ une d\'ecomposition primaire r\'eduite  de $I'$. Nous allons renum\'eroter les $Q_l$ de telle sorte que $Q_l\subset I$ pour $1\leq l\leq s$ et $Q_l\not\subset I$ pour $l>s$. Or $I\k[t,x]_I=I'\k[t,x]_I$ et $Q_1\k[t,x]_I\cap\cdots\cap Q_s\k[t,x]_I$ est une d\'ecomposition primaire r\'eduite de $I \k[t,x]_I$ (cf. Theorem 17, chapter 4 \cite{Z-S}), donc $s=1$ et $I=Q_1$. Soit $J':=Q_2\cap \cdots\cap Q_q$ si $q\neq 1$ et $J'=A$ si $q=1$. On pose alors $J=J'$ si $I\not\subset J'$ et $J:=A$ si $I\subset J'$. On  a donc $I'=I\cap J$ et $J\not\subset I$.\\
Chaque id\'eal $Q_l$ est engendr\'e par des polyn\^omes de degr\'e inf\'erieur ou \'egal \`a $\l_2(n+1,\,d)$. Donc $J'$ est engendr\'e par des polyn\^omes de degr\'e inf\'erieur ou \'egal \`a $(n+1)((q-2)\l_2(n+1,d))^{2^{n}}+\l_2(n+1,d)$.
On a $\d\in\sqrt{I+J}$ (cf. lemma 7 \cite{Wa}). Notons 
$$e':=(n+3)^2\left(1+\l_2(n+1,d)+(n+1)((d^{n+1}-2)\l_2(n+1,d))^{2^{n}}\right)^{n+2}.$$
Alors $\d^{e'}\in I+J$ car $q\leq d^{n+1}$.
Soit 
$$\a:=(e+1)'{\b}_{k+1}(n,k(d-1),i)+1$$
Soit $x(t)\in\k[[t]]^n$ tel que $f(t,x(t))\in (t)^{\a}$ pour tout $f\in I$.\\
\\
\textbf{Cas 1 :}  Si $\d(t,x(t))\in (t)^{{\b}_{k+1}(n,k(d-1),i)}$, comme $\a\geq {\b}_{k+1}(n,k(d-1),i)$, par d\'efinition de ${\b}_{k+1}$, il existe $\ovl{x}(t)\in\k[[t]]^n$ tel que $f(t,\ovl{x}(t))=\d(t,\ovl{x}(t))=0$ pour tout $f\in I$ et $\ovl{x}(t)-x(t)\in (t)^{i}$, et on a la conclusion voulue.\\
\\
\textbf{Cas 2 :} Supposons maintenant que $\d(t,x(t))\notin (t)^{{\b}_{k+1}(m,k(d-1),i)}$. En d\'erivant la relation  
$$f_r(t,x(t))\in(t)^{(e'+1){\b}_{k+1}(n,k(d-1),i)+1}$$
 par rapport \`a $t$, on obtient
$$\frac{\partial f_r}{\partial t}(t,x(t))=-\sum_{\l}\frac{\partial f_r}{\partial x_{\l}}(t,x(t))\frac{\partial x_{\l}(t)}{\partial t}\ \text{mod.}\ (t)^{(e'+1){\b}_{k+1}(n,k(d-1),i)}.$$ 
On en d\'eduit l'existence d'un mineur d'ordre $k$ de la matrice jacobienne de $f$, encore not\'e $\d$, qui ne fait intervenir que des d\'eriv\'ees partielles par rapport aux $x_{\l}$ (i.e. $\d=\frac{\partial(f_1,...,f_k)}{\partial (x_1,...,x_k)}$ quitte \`a renommer les variables $x_{\l}$), tel que $\d(t,x(t))\notin (t)^{{\b}_{k+1}(n,k(d-1),i)}$. En particulier $\d\notin (f_1,...,f_p)$.\\
Par d\'efinition de $\a$ et en utilisant le th\'eor\`eme des fonctions implicites de Tougeron (c.f. th\'eor\`eme 3.2 \cite{To} ou Lemma 5.11 \cite{Ar2}), il existe $\ovl{x}(t)\in\k[[t]]^n$ tel que $f_r(t,\ovl{x}(t))=0$, pour $1\leq r\leq k$, et $\ovl{x}(t)-x(t)\in (t)^{e'{\b}_{k+1}(n,k(d-1),i)}$. On a alors $\d(t,x(t))-\d(t,\ovl{x}(t))\in (t)^{e'{\b}_{k+1}(n,k(d-1),i)}$ et $e'\geq 1$. Donc 
$$\d(t,\ovl{x}(t))\notin (t)^{{\b}_{k+1}(n,k(d-1),i)}.$$ Or $\d^{e'}\in I+J$, donc $\d^{e'}=\sum_{r=1}^ph_rf_r+h_0$ o\`u $h_0\in J$. Comme $f_r(t,x(t))-f_r(t,\ovl{x}(t))\in (t)^{e'{\b}_{k+1}(n,k(d-1),i)}$ et $f_r(t,x(t))\in (t)^{\a}$, pour $1\leq r\leq p$, on voit que 
$$f_r(t,\ovl{x}(t))\in (t)^{e'{\b}_{k+1}(n,k(d-1),i)}\ \text{ pour } 1\leq r\leq p$$ (car $\a\geq e'{\b}_{k+1}(n,k(d-1),i)$). Donc n\'ecessairement $h_0(t,\ovl{x}(t))\notin  (t)^{e'{\b}_{k+1}(n,k(d-1),i)}$, d'o\`u $h_0(t,\ovl{x}(t))\neq 0$. Or $h_0f_r\in I\cap J=I'=(f_1,...,\,f_k)$ pour $1\leq r\leq p$. On en d\'eduit $f_r(t,\,\ovl{x}(t))=0$ pour $1\leq r\leq p$. Comme ${\b}_{k+1}(n,k(d-1),i)\geq i$, on a $\ovl{x}(t)-x(t)\in (t)^i$ ce qui conclut la d\'emonstration.

\section{Fonction de Artin d'un id\'eal binomial}\label{binomsec}
Pour \'etudier la fonction de Artin d'un id\'eal binomial, nous allons mettre les $x_i(t,z)$ sous forme de Weierstrass et nous allons utiliser le lemme suivant :
\begin{lemma}\label{w}
Soit $\k$ un corps quelconque.
Soient $P=u(x)(a_0(\wdt{x})+a_1(\wdt{x})x_n+\cdots+a_{d-1}(\wdt{x})x_n^{d-1}+x_n^d)$ et  $Q=v(x)(b_0(\wdt{x})+b_1(\wdt{x})x_n+\cdots+b_{e-1}(\wdt{x})x_n^{e-1}+x_n^e)\in\k[[x_1,...,x_n]]$ deux polyn\^omes de Weierstrass en $x_n$ (avec $x=(x_1,...,x_n)$ et $\wdt{x}:=(x_1,...,x_{n-1})$) tels que $P-Q\in (x)^i$ avec $i>d$. Alors $d=e$, $u(x)-v(x)\in (x)^{i-d}$ et $a_j(\wdt{x})-b_j(\wdt{x})\in (\wdt{x})^{i-d+\ord(P)-j}$ pour $0\leq j\leq d-1$.
\end{lemma}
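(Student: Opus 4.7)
The plan is to transfer the hypothesis $P - Q \in (x)^i$ into the local ring $S := \k[[x]]/(W_a)$ and there apply the uniqueness of Weierstrass preparation quantitatively. Setting $\wdt{x} = 0$ first yields $d = e$: the hypothesis gives $P(0, x_n) - Q(0, x_n) \in (x_n)^i$, while $P(0, x_n) = u(0, x_n) x_n^d$ and $Q(0, x_n) = v(0, x_n) x_n^e$ have exact $x_n$-orders $d$ and $e$ respectively since $u(0, 0) v(0, 0) \neq 0$; if $d \neq e$, the $x_n$-order of the difference would be $\min(d, e) \leq d < i$, contradicting the hypothesis.

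Write $W_a := a_0 + \cdots + x_n^d$ and $W_b := b_0 + \cdots + x_n^d$, let $\pi : \k[[x]] \to S$ be the projection, $\xi$ the class of $x_n$ in $S$, and $\m_S$ the maximal ideal of $S$. Since $\pi(P) = 0$ and $W_b \equiv W_b - W_a = \sum_{j=0}^{d-1}(b_j - a_j) x_n^j \pmod{W_a}$, one computes $\pi(P - Q) = v(\xi) \sum_{j=0}^{d-1}(a_j - b_j) \xi^j$. The surjection $\pi$ sends $(x)^i$ onto $\m_S^i$, and $v(\xi)$ is a unit of $S$, so $\sum_{j=0}^{d-1}(a_j - b_j) \xi^j \in \m_S^i$.

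The main technical step is to extract from this the inequality $a_j - b_j \in (\wdt{x})^{i - d + \ord(P) - j}$. Lifting back, the polynomial $R := \sum_j (a_j - b_j) x_n^j$ of $x_n$-degree less than $d$ lies in $(x)^i + (W_a)$, so by uniqueness of Weierstrass division $R$ is the Weierstrass remainder of some $F \in (x)^i$ divided by $W_a$. A careful recursive analysis of the reductions $\rho_k(x_n) := x_n^k \bmod W_a$ for $k \geq d$ — using that $W_a$ has $x_n$-order $\ord(P)$, so the rule $x_n^d \equiv -\sum a_j x_n^j$ only reinjects coefficients $a_j$ with index $j \geq \ord(P)$ — tracks how $(\wdt{x})$-order propagates into each basis slot, yielding the claimed bound. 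Once this estimate on $a_j - b_j$ is in hand, the bound on $u - v$ follows from the identity $(u - v) W_a = (P - Q) - v(W_a - W_b)$: the right-hand side lies in $(x)^{i - d + \ord(P)}$ because the first summand is in $(x)^i$ and the second in $(x)^{i-d+\ord(P)}$ by the previous step, and order additivity in the regular local ring $\k[[x]]$ (whose associated graded is a polynomial ring, hence a domain) then produces $u - v \in (x)^{i - d}$.

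The principal obstacle will be the recursive Weierstrass-remainder estimate in the third paragraph: naive bounds on the $(\wdt{x})$-order of the coefficients of $\rho_k$ weaken as $k$ grows, so the proof must exploit either the particular shape of $F$ coming from $P - Q$ (rather than a generic element of $(x)^i$), or the factorization $W_a = x_n^{\ord(P)} \widetilde{W}_a$, reducing the computation to a Weierstrass polynomial $\widetilde{W}_a$ of smaller $x_n$-degree $d - \ord(P)$ where the combinatorics of the reduction become tractable.
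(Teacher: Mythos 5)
Your first two paragraphs are sound: the argument that $d=e$ by specializing $\wdt{x}=0$ matches the paper, and the deduction $u-v\in(x)^{i-d}$ from the $a_j-b_j$ estimate via $(u-v)W_a=(P-Q)-v(W_a-W_b)$ and order-additivity in $\k[[x]]$ is correct. The transfer to $S=\k[[x]]/(W_a)$ and the observation that $R:=\sum_j(a_j-b_j)x_n^j$ is the Weierstrass remainder of the particular element $F:=v^{-1}(P-Q)\in(x)^i$ are also correct.

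The gap is exactly where you say it is: the third-paragraph ``recursive analysis of $\rho_k:=x_n^k\bmod W_a$'' is not an argument but a placeholder, and your own fourth paragraph correctly explains why a naive version of it fails. Worse, the two ingredients you offer to repair it are both wrong. The claim that the reduction rule $x_n^d\equiv-\sum_j a_jx_n^j$ ``only reinjects coefficients $a_j$ with index $j\geq\ord(P)$'' is false: $\ord(P)=\min_j(\ord(a_j)+j)$ (capped at $d$), which does not force $a_j=0$ for $j<\ord(P)$. For instance $W_a=x_1+x_n^2$ has $\ord(P)=1$ yet $a_0=x_1\neq 0$. For the same reason the factorization $W_a=x_n^{\ord(P)}\widetilde{W}_a$ you invoke does not exist. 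Moreover the phenomenon you worry about is real: with $W_a=x_1+x_n^2$ and $F=x_n^{2m}\in(x)^{2m}$, the Weierstrass remainder is $\pm x_1^m$, of order $m$, far below the target $2m-1$. So any proof must genuinely exploit the specific shape $F=v^{-1}(P-Q)$, and you never do.

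The paper's proof avoids the difficulty entirely by a change of viewpoint you did not find. Instead of dividing an element of $(x)^i$ by $W_a$, it performs the Weierstrass division of the \emph{fixed monomial} $x_n^d$ by $Q$ (after normalizing $u=1$, the division by $P$ is trivially $x_n^d=P-\sum_j a_jx_n^j$), and runs the division algorithm explicitly: $R_0:=x_n^d-Q$, and at each step one removes the smallest $x_n^d$-divisible monomial $M_k$ and replaces $R_k$ by $R_{k+1}=R_k-\frac{M_k}{x_n^d}Q$. The key observation is that $R_0=(x_n^d-P)+(P-Q)$, and $x_n^d-P$ has $x_n$-degree $<d$, so every $x_n^d$-divisible monomial of $R_0$ comes from $P-Q\in(x)^i$ and hence has degree $\geq i$. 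This makes $\deg M_k\geq i$ for all $k$, which immediately gives $C-1\in(x)^{i-d}$ (quotient estimate) and, since each correction term $\frac{M_k}{x_n^d}Q$ has order $\geq(i-d)+\ord(Q)$, the remainder estimate $R'-R\in(x)^{i-d+\ord(P)}$, from which the coefficientwise bound $a_j-b_j\in(\wdt{x})^{i-d+\ord(P)-j}$ follows. You should redo the heart of the argument along these lines; the abstract reduction modulo $(W_a)$ is not wrong, but by itself it loses exactly the information (the common dividend $x_n^d$) that makes the estimate go through.
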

\begin{proof}
Tout d'abord comme $i>d$, on voit que $e=d$ car $(P-Q)(0,...,0,x_n)\in (x_n)^i$ et le terme constant de $u$ est \'egal au terme constant de $v$. De m\^eme $\ord(P)=\ord(Q)$. D'autre part, si $P-Q\in (x)^i$ alors $u^{-1}P-u^{-1}Q\in (x)^i$. On peut donc supposer que $u=1$.\\
La division de Weirstrass de $x_n^d$ par $P$ par rapport \`a la variable $x_n$ est la suivante :
$$x_n^d=1.P+R(x)=P+(-a_{d-1}(\wdt{x})x_n^{d-1}-\cdots -a_0(\wdt{x})).$$
Consid\'erons la division de Weierstrass de $x_n^d$ par $Q$ par rapport \`a la variable $x_n$ : 
$$x_n^d=C(x)Q+R'(x)=C(x)Q+D_{d-1}(\wdt{x})x_n^{d-1}+\cdots +D_0(\wdt{x}).$$
Par unicit\'e dans le th\'eor\`eme de division de Weierstrass, on a $C(x)=v^{-1}(x)$ et $D_j(\wdt{x})=-b_j(\wdt{x})$ pour $0\leq j\leq d-1$. Cette division peut se faire de mani\`ere algorithmique. En effet on construit les suites $(C_k(x))_k$ et $(R_k(x))_k$ par r\'ecurrence de la mani\`ere qui suit : on pose $x_n^d=C_0(x)Q+R_0$ avec $C_0(x)=1$. Puis par induction, pour $k\geq 0$, si $x_n^d=C_k(x)Q+R_k(x)$, on consid\`ere le plus petit mon\^ome de $R_k$ divisible par $x_n^d$, not\'e $M_k$, et on pose $R_{k+1}(x):=R_k(x)-\frac{M_k}{x_n^d}Q$. Alors la suite $(\ord(R_{k+1}(x)-R_k(x)))_k$ est strictement croissante et $(R_k(x))_k$ converge vers $R'(x)$ pour la topologie $(x)$-adique. De m\^eme la suite $(\deg(M_k))_k$ est strictement croisssante et $C(x)=\sum_k\frac{M_k}{x_n^d}$. En particulier, comme $R_0(x)-R(x)=Q-P\in (x)^i$, $M_0$ est un mon\^ome de degr\'e sup\'erieur ou \'egal \`a $i$, et $C(x)-1\in (x)^{i-d}$. On en d\'eduit que $R(x)-R'(x)\in (x)^i$, donc $a_j(\wdt{x})-b_j(\wdt{x})\in (x)^{i-d+\ord(P)-j}$, pour $0\leq j\leq d-1$.
\end{proof}

Nous allons maintenant \'etudier la fonction de Artin d'un id\'eal binomial. Soit $\k$ un corps alg\'ebriquement clos et soit $I$ un id\'eal de $\k[[t,z]][X_1,...,X_n]$ engendr\'e par $f_1$,..., $f_p$ avec 
$$f_k:=a_kX^{\a_k}+b_kX^{\b_k}$$ o\`u $a_k$, $b_k\in\k$ et $\a_k$, $\b_k\in\N^n$ pour $1\leq k\leq p$. Consid\'erons $x_j(t,z)\in\k[[t,z]]$, $1\leq j\leq n$, tels que $f_k(x_j(t,z))\in (t,z)^i$, $1\leq k\leq p$. On peut supposer, quitte \`a faire un changement lin\'eaire de coordonn\'ees en $t$ et $z$, que les $x_j$ sont r\'eguli\`eres en la variable $z$. On a donc 
$$x_j(t,z)=u_j(t,z)(x_{j,0}(t)+x_{j,1}(t)z+\cdots+x_{j,d_{j}-1}(t)z^{d_j-1}+z^{d_j})$$
o\`u $d_j$ est l'ordre de $x_j(t,z)$ avec $x_{j,l}\in (t)\k[[t]]$, pour $1\leq j\leq n$ et $0\leq l\leq d_j$.\\
 Supposons que $i>\sum_j\a_{k,j}d_{j}$ pour tout $k$. En utilisant le lemme \ref{w}, on a alors 
$$D_k:=\sum_j\a_{k,j}d_{j}=\sum_j\b_{k,j}d_{j},\ \ 1\leq k\leq p,$$
$$f_k(u_1(t,z),...,u_n(t,z))\in (t,z)^{i-D_k},\ \ 1\leq k\leq p$$
et 
$$P_d(x_{j,l}(t))\in (t)^{i-d}$$
o\`u $P_d(X_{j,l})\in\k[X_{j,l},\  _{1\leq j\leq n,\ 0\leq l\leq d_j-1}]$ est le coefficient de $z^d$ dans 
$$\prod_j\left(X_{j,0}+\cdots+X_{j,d_{j}-1}z^{d_j-1}+z^{d_j}\right)^{\a_{k,j}}+\prod_j\left(X_{j,0}+\cdots+X_{j,d_{j}-1}z^{d_j-1}+z^{d_j}\right)^{\b_{k,j}}$$
Notons $D:=\max_kD_k$. On obtient alors deux syst\`emes d'\'equations ind\'ependants l'un de l'autre :
\begin{equation}\label{a}f_k(u_1(t,z),...,u_n(t,z))\in (t,z)^{i-D},\ \ 1\leq k\leq p\end{equation} 
\begin{equation}\label{b}P_d(x_{j,l}(t))\in (t)^{i-D},\ \ 0\leq d\leq D-1.\end{equation}
On va chercher la fonction d'approximation de Artin de ces deux syst\`emes, le premier ayant une fonction de Artin born\'ee par une fonction affine puisque  $u_i(0,0)\neq 0$ pour tout $i$ et que le lieu singulier d'une vari\'et\'e torique est inclus dans l'union des axes de coordonn\'ees,  et le second \'etant un syst\`eme \`a coefficients dans $\k[[t]]$.
\begin{theorem}\label{binom}
Soit $\k$ un corps alg\'ebriquement clos de caract\'eristique nulle. Alors on a les propri\'et\'es suivantes :
\begin{enumerate}
\item[i)] Pour tout $d'\in\N$ et pour tout $\underline{d}:=(d_1,....,d_n)\in\N^n$,  il existe  $a_{\underline{d},d'}$, $b_{\underline{d},d'}$ v\'erifiant la propri\'et\'e suivante :\\
Soit $I$ un id\'eal binomial  de $\k[U_1,...,U_n]$ engendr\'e par des bin\^omes $f_1,...,f_p$  de degr\'e inf\'erieur \`a $d'$. Soit $i\in\N$ et soient
$x_1(t,z)$,..., $x_n(t,z)\in\k[[t,z]]$ tels que $\ord(x_j(t,z))=d_j$ et $f_k(x_j(t,z))\in (t,z)^{a_{\underline{d},d'}i+b_{\underline{d},d'}}$, pour tous $k$. Alors il existe $\ovl{x}_j(t,z)\in\k[[t,z]]$, tels que $f_k(\ovl{x}_j(t,z))=0$ pour tout $k$ et tels que $\ovl{x}_j(t,z)-x_j(t,z)\in (t,z)^i$ pour tout $j$.\\
\item[ii)] Pour tout $d'\in\N$ il existe une fonction doublement exponentielle en $i$, not\'ee $\b_{d'}$, telle que  pour tout id\'eal binomial $I$ de $\k[U_1,...,U_n]$ engendr\'e par des bin\^omes de degr\'e inf\'erieur \`a $d'$, la fonction de Artin de $I\k[[t,z]][U]$ est born\'ee par $\b_{d'}$. \\
\end{enumerate}
\end{theorem}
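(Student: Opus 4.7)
Le préambule au théorème a déjà ramené l'étude à celle des deux systèmes découplés (\ref{a}) et (\ref{b}), obtenus par préparation de Weierstrass et application du lemme \ref{w}, valables dès que $i>D:=\max_k\sum_j\a_{k,j}d_j$. Ma stratégie consistera à borner les fonctions d'approximation de ces deux sous-systèmes séparément puis à les recombiner (en ajoutant le décalage $D$ et en imposant $i>D$) ; la partie (ii) s'obtiendra ensuite à partir de la partie (i) par une analyse de troncature ramenant au cas $\underline{d}$ borné.

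Pour la partie (i), je fixe $\underline{d}$ et $d'$. Le système (\ref{a}) est binomial en les $n$ unités $u_j$, et comme $u_j(0,0)\neq 0$, ses solutions approchées appartiennent au tore $(\k^*)^n$, inclus dans le lieu lisse de la variété binomiale $V(I)$ (le lieu singulier d'une variété torique est contenu dans l'union des hyperplans de coordonnées). Le théorème des fonctions implicites de Tougeron, déjà invoqué dans la preuve du théorème \ref{theo1}, fournit alors pour (\ref{a}) une fonction d'approximation affine en $i-D$ dont les coefficients ne dépendent que de $d'$ et de $\underline{d}$ (via les mineurs jacobiens). Le système (\ref{b}) est un système polynomial à coefficients dans $\k[[t]]$ en les $N:=\sum_j d_j$ variables $x_{j,l}$, où chaque $P_d$ est de degré borné par $d'$. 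Le théorème \ref{theo1} donne alors pour (\ref{b}) une fonction d'approximation affine en $i-D$, de coefficients polynomiaux en $d'$ et doublement exponentiels en $N$. En prenant le maximum des deux bornes on obtient la fonction affine $i\mapsto a_{\underline{d},d'}\,i+b_{\underline{d},d'}$ annoncée.

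Pour la partie (ii), la difficulté centrale est d'éliminer la dépendance en $\underline{d}$ : le coefficient $b_{\underline{d},d'}$ peut croître doublement exponentiellement en $\sum_j d_j$, qu'il faut donc borner en fonction de $i$. L'idée est qu'un $x_j$ d'ordre $d_j\geq i$ peut être remplacé par $\ovl{x}_j:=0$ sans perdre la condition $\ovl{x}_j-x_j\in(t,z)^i$. On pose donc $T:=\{j\ :\ d_j\geq i\}$ et on substitue $X_j=0$ pour $j\in T$ dans les générateurs de $I$ : chaque binôme devient soit nul, soit monomial (forçant alors la nullité d'une autre variable), soit un binôme en les variables restantes. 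L'inclusion $x_j^{\a_{k,j}}\in(t,z)^{\a_{k,j}d_j}\subset(t,z)^i$ pour $j\in T$ et $\a_{k,j}\geq 1$ assure que l'approximation initiale reste une approximation du système réduit, à un décalage maîtrisé près.

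Il suffit alors d'appliquer la partie (i) à chacun des (au plus $2^n$) systèmes réduits obtenus : l'ordre de leurs approximations vérifie $d_j<i$, donc $\sum_j d_j<n i$, et la borne fournie par (i) est par conséquent doublement exponentielle en $ni$, et donc en $i$. La fonction $\beta_{d'}$ s'obtient en prenant le maximum sur ces systèmes. Le principal obstacle technique sera de rédiger précisément la réduction par substitution, et notamment de traiter les équations monomiales issues des binômes dont un seul monôme fait intervenir $T$ (qui forcent récursivement la nullité de variables supplémentaires), tout en conservant un contrôle additif des décalages dans l'approximation.
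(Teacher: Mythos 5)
Your approach to both parts mirrors the paper's. For (i), the split between system (\ref{a}) (treated by smoothness at a torus point) and system (\ref{b}) (treated by Theorem \ref{theo1}) is exactly the paper's; the paper, however, first reduces (\ref{a}) to a \emph{prime} binomial ideal via $\sqrt{I}$, a bound $e$ with $\sqrt{I}^e\subset I$, and the Eisenbud--Sturmfels binomial primary decomposition into $q$ components, so that the slope of the affine bound is $qe$, rather than coming ``via les mineurs jacobiens'' as you suggest; and one must still write the recombination $\ovl{x}_j=\ovl{u}_j\,\ovl{w}_j$ and check
$f_k(\ovl{x})=\bigl(a_k\prod_j\ovl{u}_j^{\a_{k,j}}+b_k\prod_j\ovl{u}_j^{\b_{k,j}}\bigr)\prod_j\ovl{w}_j^{\a_{k,j}}=0$,
which uses both that $f_k(\ovl{u})=0$ and that the lifted Weierstrass coefficients give $\prod_j\ovl{w}_j^{\a_{k,j}}=\prod_j\ovl{w}_j^{\b_{k,j}}$.

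For (ii), the truncation $\ovl{x}_j:=0$ when $d_j\geq i$ is exactly the paper's device, but the obstacle you flag at the end is a genuine gap, not a mere matter of rédaction, and your sketch does not close it. When exactly one monomial of $f_k$ involves the truncated set $T$ (say $\a_{k,j_1}\geq 1$ for some $j_1\in T$, and $\b_{k,j}=0$ for all $j\in T$), one gets $f_k(\ovl{x})=b_kx^{\b_k}$, a nonzero monomial in the untouched variables whose order is $D_k=\sum_j\a_{k,j}d_j=\sum_j\b_{k,j}d_j$ (equality forced by Lemme \ref{w} once the ambient approximation exceeds $D_k$), so that $i\leq D_k<d'i$. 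This is \emph{not} a high-order approximate solution of the reduced system: the approximation order drops from $C^{C^{ni}}(i+1)$ to $D_k<d'i$, so (i) cannot be applied to the reduced ideal, contrary to the ``décalage maîtrisé'' you invoke. Nullifying a further variable $x_{j_2}$ with $\b_{k,j_2}\geq 1$ does not help either, since $d_{j_2}<i$ means $\ovl{x}_{j_2}-x_{j_2}$ would then have order $<i$, violating the required closeness. (Incidentally, the paper's own line ``$f_k(\ovl{x}_j(t,z))=0$ ou $f_k(\ovl{x}_j(t,z))=f_k(x_j(t,z))$'' also overlooks this mixed case, as it only retains the subfamily $\mathcal{E}$ of binomials left intact.) An additional argument handling these mixed binomials is needed before the reduction to $\sum_j d_j<ni$ can be carried out.
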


\begin{proof}
Nous allons d'abord montrer i) dont on d\'eduira ensuite ii). \\
\\
Supposons, comme pr\'ec\'edemment, que $I$ est  engendr\'e par les $f_k:=a_kX^{\a_k}+b_kX^{\b_k}$, o\`u $a_k$, $b_k\in\k$ et $\a_k$, $\b_k\in\N^n$ pour $1\leq k\leq p$.\\
L'id\'eal $I$ \'etant un id\'eal binomial, $\sqrt{I}$ est encore un id\'eal binomial et, si $\sqrt{I}=I_1\cap\cdots\cap I_q$ est une d\'ecomposition primaire minimale de $\sqrt{I}$, alors les $I_k$ sont des id\'eaux binomiaux \cite{E-S}. Soit $e\in\N$ tel que $\sqrt{I}^e\subset I$.
Supposons que $f_l(u_1(t,z),...,u_n(t,z))\in (t,z)^{qei},\ \ 1\leq l\leq p$ o\`u $u_{j,0}:=u_j(0,0)\neq 0$ pour tout $j$. Alors $g(u_1(t,z),...,u_n(t,z))\in (t,z)^{qi}$ pour tout $g\in \sqrt{I}$, et donc il existe un entier $k$ tel que $g(u_1(t,z),...,u_n(t,z))\in (t,z)^{i}$ pour tout $g\in I_k$. Comme $I_k$ est un id\'eal premier binomial de $\k[U]$ alors $\left(\k[U]/I_k\right)_{(U_1-u_{1,0},...,U_n-u_{n,0})}$ est r\'egulier car $u_{1,0}...u_{n,0}\neq 0$ (le lieu singulier d'une vari\'et\'e torique est toujours inclus dans l'union des hyperplans de coordonn\'ees). Donc $\k\lgw \left(\k[U]/I_k\right)_{(U_1-u_{1,0},...,U_n-u_{n,0})}$ est lisse,  donc $\k[[t,z]]\lgw \left(\k[[t,z]][U]/I_k\right)_{(t,z,U_1-u_{1,0},...,U_n-u_{n,0})}$ est lisse ; ceci implique qu'il existe $\ovl{u}_j(t,z)\in\k[[t,z]]$, $1\leq j\leq n$, tels que 
$ g(\ovl{u}_1(t,z),...,\ovl{u}_n(t,z))=0$ pour tout $g\in I_k$ et $\ovl{u}_j(t,z)-u_j(t,z)\in (t,z)^i$.\\
\\
Soit $i\lgw ai+b$ la fonction de Artin de l'id\'eal de $\k[[t]][X_{j,l}]$ engendr\'e par les $P_d(X_{j,l})$, $0\leq d\leq D-1$. Posons $a_{\underline{d}}:=\max\{qe,a\}$ et $b_{\underline{d}}:=b+D$.  D'apr\`es les th\'eor\`emes \ref{bornes_alg} et  \ref{theo1}, on voit que $a_{\underline{d}}$ et $b_{\underline{d}}$ peuvent \^etre born\'ees par une fonction d\'ependant uniquement de $d'$.\\
Soit $i\in \N$, on a alors $a_{\underline{d}}i+b_{\underline{d}}>D$.\\
D'apr\`es ce qui pr\'ec\`ede, si $f_k(x_j(t,z))\in (t,z)^{a_{\underline{d}}i+b_{\underline{d}}} $ $\forall k$, alors 
$$f_k(u_1(t,z),...,u_n(t,z))\in (t,z)^{qei},\ \ 1\leq k\leq p$$
$$\text{et } P_d(x_{j,l}(t))\in (t)^{ai+b},\ \ 0\leq d\leq D.$$
Donc il existe $\ovl{u}_j(t,z)\in\k[[t,z]]$, $1\leq j\leq n$, tels que 
$ g(\ovl{u}_1(t,z),...,\ovl{u}_n(t,z))=0$ pour tout $g\in I$ et $\ovl{u}_j(t,z)-u_j(t,z)\in (t,z)^i$, et il existe $\ovl{x}_{j,l}(t)\in\k[[t]]$ tels que $P_d(x_{j,l}(t))=0$, $0\leq d\leq D$, et $\ovl{x}_{j,l}(t)-x_{j,l}(t)\in (t)^i$. On pose alors $\ovl{x}_j=\ovl{u}_j(t,z)(\ovl{x}{j,0}+\ovl{x}_{j,1}t+\cdots+\ovl{x}_{j,d_j-1}t^{d_j-1}+t^{d_j})$. On a bien $f_k(\ovl{x}_j(t,z))=0$, $1\leq k\leq p$, et $\ovl{x}_j(t,z)-x_j(t,z)\in (t,z)^i$, $1\leq j\leq n$. Ceci prouve i).\\
\\
On peut remarquer que $a$ et $b$ sont born\'ees par une fonction polynomiale en $\max_k\{|\a_k|,|\b_k|\}$ de degr\'e exponentiel en $\sum_jd_j$, d'apr\`es le th\'eor\`eme \ref{1var}. 
Donc il existe une constante $C$ telle que $a_{\underline{d}}, b_{\underline{d}}\leq C^{C^{\sum_jd_j}}$. Notons $a^{\mathcal{E}}_{\underline{d}}$ et $ b^{\mathcal{E}}_{\underline{d}}$ les plus petites constantes satisfaisant i) pour l'id\'eal engendr\'e par les $f_k$, $k\in\mathcal{E}$, o\`u  $\mathcal{E}$ est un sous-ensemble  de $\{1,...,p\}$. L\`a encore il existe une constante $C_{\mathcal{E}}$ telle que $C_{\mathcal{E}}^{C_{\mathcal{E}}^{\sum_{j\in\mathcal{E}}d_j}}$ majore  $a^{\mathcal{E}}_{\underline{d}}$ et $ b^{\mathcal{E}}_{\underline{d}}$. Donc en posant $C:=\max_{\mathcal{E}}C_{\mathcal{E}}$  on a  $a^{\mathcal{E}}_{\underline{d}}, b^{\mathcal{E}}_{\underline{d}}\leq C^{C^{\sum_{j}d_j}}$ pour tout sous-ensemble $\mathcal{E}$ de $\{1,...,p\}$.\\
Soit $x_j(t,z)\in\k[[t,z]]$, $1\leq j\leq n$, tels que $f_k(x_j(t,z))\in (t,z)^{C^{C^{ni}}(i+1)}$, $1\leq k\leq p$.
Si $d_j:=\ord(x_j(t,z))\geq i$, alors on pose $\ovl{x}_j(t,z)=0$, sinon on pose $\ovl{x}_j(t,z)=x_j(t,z)$. Alors $f_k(\ovl{x}_j(t,z))=0$ ou $f_k(\ovl{x}_j(t,z))=f_k(x_j(t,z))$ selon l'entier $k$. On peut donc remplacer les $x_j(t,z)$ par les $\ovl{x}_j(t,z)$ et supposer que $\ord(x_j(t,z))<i$ pour tout $j$. On supposer que $C^{C^{ni}}(i+1)>D:=\max_k\{\sum_j\a_{k,j}d_j, \sum_j\b_{k,j}d_j\}$ en choisissant $i$ assez grand. Comme $d_j<i$, $1\leq j\leq n$, on a $f_k(x_j(t,z))\in (t,z)^{a_{\underline{d}}i+b_{\underline{d}}} $, $1\leq k\leq p$. On applique alors i), et on a l'existence de $\ovl{x}_j(t,z)\in\k[[t,z]]$ tels que $f_k(\ovl{x}_j(t,z))=0$, $1\leq k\leq p$, et $\ovl{x}_j(t,z)-x_j(t,z)\in (t,z)^i$, $1\leq j\leq n$. Ceci prouve ii).
\end{proof}

\begin{ex}
Dans \cite{Ro2}, il est montr\'e que la fonction de Artin du polyn\^ome $X^2-ZY^2$ n'est  pas born\'ee par une fonction affine. On voit que la non-lin\'earit\'e de la fonction de Artin dans ce cas provient du fait que les fonctions de Artin-Greenberg des syst\`emes $P_d(X_{j,l})=0$, $0\leq d\leq D$, sont born\'ees par des fonctions affines dont les coefficients croissent vite en fonction de l'ordre des $x_{j,l}(t)$.
\end{ex}
\begin{ex}
On peut remarquer que la famille de solutions approch\'ees de l'\'equation $X^2-Y^3$ dans \cite{Ro1} sont des solutions dont l'ordre est born\'e.
\end{ex}

\begin{ex}
Soit $f:=X^2-Y^3$. Si $x(t,z),y(t,z)\in\k[[t,z]]$ v\'erifient $\ord(x(t,z))=3$ et $\ord(y(t,z))=2$, on peut \'ecrire 
$$x(t,z)=u(t,z)(x_0(t)+x_1(t)z+x_2(t)z^2+z^3)$$
$$y(t,z)=v(t,z)(y_0(t)+y_1(t)z+z^2).$$
Si $x^2(t,z)-y^3(t,z)\in (t,z)^i$ avec $i>6$, alors 
$$u^2(t,z)-v^3(t,z)\in (t,z)^{i-6}$$
et les $x_j(t)$ et $y_j(t)$ sont solutions du syst\`eme suivant modulo $(t)^{i-6}$ :
$$\left\{\begin{array}{c}
2x_2-3y_1 =0\\
x_2^2+2x_1-3y_1^2-3y_0 =0 \\
2x_0+2x_1x_2-y_1^3-6y_0y_1=0\ \\
x_1^2+2x_0x_2-3y_0y_1^2-3y_0^2=0\\
2x_0x_1-3y_0^2y_1 =0\\
x_0^2-y_0^3  =0\end{array}\right.$$
On peut v\'erifier \`a l'aide de Macaulay2 \cite{M2} que l'id\'eal de $\k[x_0,x_1,x_2,y_0,y_1]$ d\'efini par ces polyn\^omes n'est pas r\'eduit. Ceci semble assez g\'en\'eral. On peut remarquer que les vari\'et\'es alg\'ebriques d\'efinies par $P_d(X_{j,l})=0$, $0\leq d\leq D$, sont tr\`es proches des espaces de jets de la vari\'et\'e d\'efinie par $I$ qui ne sont pas r\'eduits en g\'en\'eral. 
\end{ex}

\begin{ex}
On consid\`ere ici le probl\`eme suivant : soient $p$ et $q$ deux entiers premiers entre eux. Si $x(t,z)$ est une s\'erie formelle telle que $
x^p(t,z)$ est proche d'une puissance $q$-i\`eme, est-ce que $x(t,z)$ est proche d'une puissance $q$-i\`eme ? Y a-t-il une fonction qui mesure le rapport entre la distance de $x^p(t,z)$ \`a une puissance $q$-i\`eme et celle de $x(t,z)$ \`a une puissance $q$-i\`eme ? On a la r\'eponse suivante :
\begin{prop}
Soient $p$ et $q$ premiers entre eux. Il existe une fonction $\b : \N\lgw \N$ telle que pour tout $x\in\k[[t,z]]$, si il existe $u(t,z)\in\k[[t,z]]$ telle que $x^p(t,z)-u^q(t,z)\in (t,z)^{\b(i)}$, alors il existe $v(t,z)\in\k[[t,z]]$ telle que $x(t,z)-v^q(t,z)\in (t,z)^i$.\\ On peut choisir pour $\b$ une fonction doublement exponentielle en $i$.\\ Pour tout $d\in\N$, si on se restreint \`a tous les $x(t,z)$ dont l'ordre vaut $d$, alors on peut choisir pour $\b$ une fonction affine.
\end{prop}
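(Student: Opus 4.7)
Le plan est de ramener l'\'enonc\'e au th\'eor\`eme \ref{binom} appliqu\'e au bin\^ome $f := X^p - Y^q$, puis d'extraire une racine $q$-i\`eme en utilisant le caract\`ere factoriel de $\k[[t,z]]$. D'apr\`es le th\'eor\`eme \ref{binom}(ii) appliqu\'e \`a l'id\'eal binomial $(f) \subset \k[X, Y]$, il existe une fonction $\gamma$ doublement exponentielle en $i$ (ne d\'ependant que de $d' := \max(p,q)$) telle que, d\`es que $x(t,z)^p - u(t,z)^q \in (t,z)^{\gamma(i)}$ pour un certain $u \in \k[[t,z]]$, il existe $\ovl{x}, \ovl{u} \in \k[[t,z]]$ v\'erifiant $\ovl{x}^p = \ovl{u}^q$ et $\ovl{x} - x, \ovl{u} - u \in (t,z)^i$.

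Il reste \`a montrer que $\ovl{x}$ est une puissance $q$-i\`eme. L'anneau $\k[[t,z]]$ \'etant factoriel, l'\'egalit\'e $\ovl{x}^p = \ovl{u}^q$ donne $p\, v_\pi(\ovl{x}) = q\, v_\pi(\ovl{u})$ pour tout irr\'eductible $\pi$, et $\gcd(p,q)=1$ entra\^ine $q \mid v_\pi(\ovl{x})$. En \'ecrivant $v_\pi(\ovl{x}) = q m_\pi$ et $w := \prod_\pi \pi^{m_\pi}$ (produit fini car $\ovl{x}\neq 0$ n'a qu'un nombre fini de diviseurs irr\'eductibles), on obtient $\ovl{x} = c\, w^q$ pour une certaine unit\'e $c \in \k[[t,z]]^\times$. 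Puisque $\k$ est alg\'ebriquement clos de caract\'eristique nulle, le polyn\^ome $T^q - c(0,0) \in \k[T]$ poss\`ede une racine simple $e_0 \in \k^\times$, et le lemme de Hensel fournit une unique unit\'e $e \in \k[[t,z]]$ rel\`evant $e_0$ avec $e^q = c$. Alors $\ovl{x} = (ew)^q$, et en posant $v := ew$, il vient $x - v^q = x - \ovl{x} \in (t,z)^i$. On peut donc prendre $\beta := \gamma$, doublement exponentielle en $i$.

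Pour la borne affine lorsque $d := \ord(x)$ est fix\'e, on distingue deux cas. Si $\beta(i) > pd$, la condition $x^p - u^q \in (t,z)^{\beta(i)}$ impose $\ord(u^q) = \ord(x^p) = pd$, donc $\ord(u) = pd/q$. Lorsque $q \nmid d$ ceci est impossible, et l'hypoth\`ese est vide d\`es que $\beta(i) \geq pd+1$ ; on peut donc prendre $\beta(i) := pd + 1$ (constante en $i$). Lorsque $q \mid d$, on \'ecrit $d = qe$ : le th\'eor\`eme \ref{binom}(i) appliqu\'e \`a $(f)$ avec $\underline{d} := (qe, pe)$ et $d' := \max(p,q)$ fournit des constantes affines $a_{\underline{d},d'}, b_{\underline{d},d'}$, et l'argument d'extraction de racine du paragraphe pr\'ec\'edent transporte cette borne affine \`a l'\'enonc\'e initial. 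La seule \'etape non triviale au-del\`a du th\'eor\`eme \ref{binom} est ainsi l'extraction d'une racine $q$-i\`eme d'une unit\'e, imm\'ediate par Hensel en caract\'eristique nulle sur un corps alg\'ebriquement clos ; l'hypoth\`ese $\gcd(p,q)=1$ intervient uniquement pour assurer la divisibilit\'e $q \mid v_\pi(\ovl{x})$.
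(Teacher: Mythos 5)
Your proof is correct and follows essentially the same route as the paper: let $\gamma$ be the Artin function of $X^p-Y^q$ (bounded via the th\'eor\`eme \ref{binom}), produce an exact solution $(\ovl x,\ovl u)$ close to $(x,u)$, then use factoriality of $\k[[t,z]]$ and $\gcd(p,q)=1$ to conclude that $\ovl x$ is a $q$-th power. You are in fact a little more careful than the paper at two points that it passes over in a single clause: the paper asserts directly that $\ovl x=v^q$, whereas the factorization argument only gives $\ovl x=c\,w^q$ with $c$ a unit, and one genuinely needs Hensel's lemma together with $\k$ alg\'ebriquement clos de caract\'eristique nulle to extract a $q$-th root of $c$; and for the affine bound you separate the case $q\nmid d$, where the hypothesis is vacuous once $\b(i)>pd$ and the order of $u$ is forced, which the paper leaves implicit in its appeal to the th\'eor\`eme \ref{binom}~i). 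These are welcome clarifications but do not change the approach.
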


\begin{proof}
Soit $\b$ la fonction de Artin de $X^p-Y^q$.
Supposons que $x^p(t,z)-u^q(t,z)\in (t,z)^{\b(i)}$. Alors il existe $x'(t,z)$, $u'(t,z)\in\k[[t,z]]$ tels que $x'^p(t,z)-u'^q(t,z)=0$ et $x(t,z)-x't(t,z)$, $u(t,z)-u'(t,z)\in (t,z)^i$. Comme $p$ et $q$ sont premiers entre eux et que $\k[[t,z]]$ est factoriel il existe $v(t,z)\in\k[[t,z]]$ tel que 
$x'(t,z)=v^q(t,z)$ et $u'(t,z)=v^p(t,z)$. On a alors le r\'esultat avec le th\'eor\`eme \ref{binom}.

\end{proof}
\end{ex}


\begin{thebibliography}{}
\bibitem[Ar]{Ar2} M. Artin, Algebraic approximation of structures over complete local rings, \textit{Publ. Math. IHES}, \textbf{36}, (1969), 23-58.
\bibitem[B-D-L-v.D]{ultra I} 
J.~Becker, J.~Denef, L.~Lipshitz, L.~van den Dries, Ultraproducts and Approximation in Local Rings I, \textit{Inventiones math.}, $\bf{51}$, 189-203 (1979).
%\bibitem[C-M]{C-M} C. Camacho, H. Movasati, Neighborhoods of analytic varieties, Monograf\'ias del Instituto de Matem\'atica y Ciencias Afines, \textbf{35}, Instituto de Matem\'atica y Ciencias Afines, IMCA, Lima, 2003.
\bibitem[E-S]{E-S} D. Eisenbud, B. Sturmfels,  Binomial ideals, \textit{Duke Math. J.}, \textbf{84},  (1996),  no. 1, 1-45. 
\bibitem[Gr]{Gr} M. J. Greenberg, Rational points in henselian discrete valuation rings, \textit{Publ. Math. IHES}, \textbf{31}, (1966), 59-64.
\bibitem[He]{He} G. Hermann, Die Frage der endlich vielen Schritte in der Theorie der Polynomideale,  \textit{Math. Ann.}, \textbf{95},  (1926),  no. 1, 736-788.
\bibitem[La]{La} D. Lascar, Caract\`ere effectif des th\'eor\`emes d'approximation d'Artin, \textit{C. R. Acad. Sci. Paris S\'er. A-B}, \textbf{287},  (1978), no. 14, A907-A910.
\bibitem[M2]{M2} D. R. Grayson, M. E. Stillman, Macaulay 2, a software system for research in algebraic geometry, Available at ttp://www.math.uiuc.edu/Macaulay2/
\bibitem[Ro1]{Ro1} G. Rond, \`A propos de la fonction de Artin en dimension $N\geq 2$, \textit{C. R. Math. Acad. Sci. Paris}, \textbf{340}, no. 8, (2005), 577-580.
\bibitem[Ro2]{Ro2} G. Rond, Sur la lin\'earit\'e de la fonction de Artin, \textit{Annales Scientifiques de l'\'Ecole Normale Sup\'erieure}, vol. 38, no. 6, (2005), 979-988.  
\bibitem[Ro3]{Ro3} G. Rond, Lemme d'Artin-Rees, th\'eor\`eme d'Izumi et fonction de Artin, \textit{Journal of Algebra}, vol. 299, no. 1, (2006), 245-275.
\bibitem[S]{S} A. Seidenberg, Constructions in Algebra, \textit{Trans. A.M.S.}, \textbf{197}, (oct 1974); 273-313.
\bibitem[Te]{Te} B. Teissier, R\'esultats r\'ecents d'alg\`ebre commutative effective, S\'eminaire Bourbaki, Vol. 1989/90,  \textit{Ast\'erisque},  No. 189-190,  (1990), Exp. No. 718, 107-131.
\bibitem[To]{To} J.-C. Tougeron, Id\'eaux de fonctions
diff\'erentiables, Ergebnisse der Mathematik und ihrer Grenzgebiete,
Band 71. Springer-Verlag, Berlin-New York, (1972).
\bibitem[Wa]{Wa} J. J. Wavrik, A theorem on solutions of analytic
equations with applications to deformations of complex structures,
\textit{Math. Ann.}, \textbf{216}, (1975), 127-142.
\bibitem[Z-S]{Z-S} O. Zariski, P. Samuel, Commutative Alegbra I, D. Van Nostrand Company, Inc., Princeton, New Jersey, 1958.
\end{thebibliography}
\end{document}